\documentclass[12pt,a4paper,oneside]{amsart}
\usepackage{amsfonts, amsmath, amssymb, amsthm}
\usepackage[colorlinks=true,citecolor=blue]{hyperref}
\usepackage[margin=1.4in]{geometry}
\usepackage{txfonts}
\usepackage[T1]{fontenc}
\usepackage{bbm}
\usepackage{mathtools}
\usepackage{graphicx}
\usepackage{enumerate}
\usepackage{verbatim}
\usepackage{tikz}
\usetikzlibrary{calc}

\usepackage[colorinlistoftodos,prependcaption,textsize=small]{todonotes}

\begingroup
    \makeatletter
    \@for\theoremstyle:=definition,remark,plain\do{%
        \expandafter\g@addto@macro\csname th@\theoremstyle\endcsname{%
            \addtolength\thm@preskip\parskip
            }%
        }
\endgroup

\newtheorem{theorem}{Theorem}
\newtheorem*{theorem*}{Theorem}
\newtheorem{lemma}[theorem]{Lemma}

\newtheorem*{claim*}{Claim}
\newtheorem*{quest*}{Question}
\newtheorem{claim}[theorem]{Claim}

\theoremstyle{definition}

\newtheorem{conj}[theorem]{Conjecture}

\newtheorem*{remark*}{Remark}

\newtheorem*{example*}{Example}

\newtheorem{obs}{Observation}

\newcommand{\trunk}{\textrm{Trunk}}

\newcommand{\ceil}[1]{\left \lceil {#1} \right \rceil}
\newcommand{\floor}[1]{\left \lfloor {#1} \right \rfloor}

\newcommand{\tree}{\textrm{Tree}}
\newcommand{\size}[1]{\left| #1 \right|}

\newcommand{\R}{\mathbb R}

\makeatletter
\providecommand*{\cupdot}{%
  \mathbin{%
    \mathpalette\@cupdot{}%
  }%
}
\newcommand*{\@cupdot}[2]{%
  \ooalign{%
    $\m@th#1\cup$\cr
    \hidewidth$\m@th#1\cdot$\hidewidth
  }%
}
\makeatother


\begin{document}

\title{Strong Erd{\H o}s--Hajnal properties in chordal graphs}
\author{Minho~Cho}
\address{Department of Mathematical Sciences, KAIST, Daejeon, South Korea}
\email{minhocho.math@gmail.com}

\author{Andreas~F.~Holmsen}
\address{Department of Mathematical Sciences, KAIST, Daejeon, South Korea}
\email{andreash@kaist.edu}

\author{Jinha~Kim}
\address{Discrete Mathematics Group, IBS, Daejeon, South Korea}
\email{jinhakim@ibs.re.kr}

\author{Minki~Kim}
\address{Division of Liberal Arts and Sciences, GIST, Gwangju, South Korea}
\email{minkikim@gist.ac.kr}

\thanks{M.~Cho and A.~F.~Holmsen were  supported by the National Research Foundation of Korea (NRF) grants funded by the Ministry of Science and ICT (NRF-2020R1F1A1A01048490).
J.~Kim was supported by the Institute for Basic Science (IBS-R029-C1). M~Kim was supported by Basic Science Research Program through the National Research Foundation of Korea (NRF) funded by the Ministry of Education (NRF-2022R1F1A1063424) and by GIST Research Project grant funded by the GIST in 2023.}

\date{\today}

\begin{abstract}
A graph class $\mathcal{G}$ has the strong Erd{\H o}s--Hajnal property (SEH-property) if there is a constant $c=c(\mathcal{G}) > 0$ such that for every member $G$ of $\mathcal{G}$, either $G$ or its complement has $K_{m, m}$ as a subgraph where $m \geq \left\lfloor c|V(G)|\right\rfloor$. We prove that the class of chordal graphs satisfy SEH-property with constant $c = 2/9$.

On the other hand, a strengthening of SEH-property which we call the colorful Erd{\H o}s--Hajnal property was discussed in geometric settings by Alon et al.~(2005) and by Fox et al.~(2012). Inspired by their results, we show that for every pair $F_1, F_2$ of subtree families of the same size in a tree $T$ with $k$ leaves, there exists subfamilies $F'_1 \subseteq F_1$ and $F'_2 \subseteq F_2$ of size $\theta \left( \frac{\ln k}{k} \left| F_1 \right|\right)$ such that either every pair of representatives from distinct subfamilies intersect or every such pair do not intersect.
Our results are asymptotically optimal.
\end{abstract}

\maketitle


\section{Introduction}

\subsubsection*{Background.} A classical conjecture of Erd{\H o}s and Hajnal \cite{ErdosHajnal1989} asserts that if $G$ is a graph on $n$ vertices which does not contain some fixed graph $H$ as an induced subgraph, then $G$ contains a clique or an independent set on at least $\lfloor n^\delta \rfloor$ vertices where $\delta>0$ is a constant depending only on the graph $H$. 
In general we say that a graph class $\mathcal{G}$ has the {\em Erd{\H o}s--Hajnal property} if there exists a constant $\delta = \delta(\mathcal{G})$ such that every graph in $\mathcal{G}$ on $n>1$ vertices contains a clique or an independent set of size $n^\delta$. (Here we use the term {\em graph class} to mean a family of graphs that is closed under taking induced subgraphs.)
\cite{ChudnovskySurvey, ErdosHajnal1989, FoxPach2008}

Rather than asking for a large clique or independent set, one variation of the Erd{\H o}s--Hajnal problem, asks for a large {\em bi-clique} as a subgraph in $G$ or in the complement of $G$. Here a {\em bi-clique of size $2k$} is a complete bipartite graph whose  vertex classes each consists of $k$ vertices.

A graph class $\mathcal{G}$ is said to have the {\em strong Erd{\H o}s--Hajnal property} (SEH-property) if there exists a constant $\varepsilon = \varepsilon(\mathcal{G})>0$ such that every graph $G\in \mathcal{G}$ on $n$ vertices or its complement $\overline{G}$ contains bi-clique of size $2 \left\lfloor\varepsilon n\right\rfloor$.
It was shown in \cite{alon-semi2005} that if a graph class has the SEH-property, then it also has the Erd{\H o}s--Hajnal property.\footnote{The reader should be warned that the name ``strong Erd{\H o}s--Hajnal property'' appears in the literature in various contexts. Here we are using the terminology introduced in \cite{FoxPachToth2010}.} A number of graph classes arising from discrete geometry have been shown to have the SEH-property, most notably are the cases of semi-algebraic graphs \cite{alon-semi2005} and intersection graphs of convex sets in the plane \cite{FoxPachToth2010}.
The goal of this paper is to study the SEH-property and related properties for some specific graph classes. 

\medskip

\subsubsection*{Our results.} The most general and powerful results regarding the SEH-property, typically do not give particular good bounds on the constant $\varepsilon$ (nor do they aim to do so). One of the goals of this paper is to provide (asymptotically) optimal constants for the SEH-property with respect to the following graph classes:
\begin{itemize}
    \item {\em Interval graphs.} An interval graph is the intersection graph of a finite family of intervals on the real line. That is, each vertex can be represented by an interval and two vertices are adjacent if and only if the corresponding intervals intersect.  
    \item {\em Cographs.} A cograph (complement-reducible graph) is a graph that can be obtained from a single vertex by complementation and disjoint union. Equivalently, it is a graph which does not contain the path on four vertices as an induced subgraph. 
    \item {\em Chordal graphs.} A chordal graph is a graph in which every cycle on four or more vertices has a chord, that is, there are no induced cycle on four or more vertices. Equivalently, a chordal graph is the intersection graph of a finite family of subtrees of an ambient tree \cite{Gavril1974}. (This is called the subtree representation of the chordal graph.)
\end{itemize}

\begin{theorem}\label{thm:seh}
The following graph classes satisfy the strong Erd{\H o}s--Hajnal property.

\begin{enumerate}
    \item \label{seh:int} Interval graphs with constant $\varepsilon = 1/4$.
    \item \label{seh:cog} Cographs with constant $\varepsilon = 1/4$.
    \item \label{seh:cho} Chordal graphs with constant $\varepsilon = 2/9$.
\end{enumerate}
\end{theorem}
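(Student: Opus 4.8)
The plan is to treat the three classes separately. In each case the aim is to exhibit one of the following kinds of witness: a clique, or an independent set, large enough that splitting it into two halves gives the required $K_{m,m}$ in $G$ (respectively in $\overline G$); or a pair of disjoint vertex sets $A,B$, each of size at least the target $m$, with every pair in $A\times B$ adjacent (a bi-clique in $G$) or every such pair non-adjacent (a bi-clique in $\overline G$).

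For the interval case, fix an interval representation and, after an arbitrarily small perturbation, assume the $2n$ endpoints are distinct. For a non-endpoint $p$ let $X_p,Y_p,Z_p$ be the sets of vertices whose intervals lie entirely left of $p$, contain $p$, lie entirely right of $p$. Sweeping $p$ from left to right, $|X_p|$ increases by exactly one at each right endpoint and $|Z_p|$ decreases by exactly one at each left endpoint. If $|Y_p|\ge\lceil n/2\rceil$ for some $p$ we are done, since $Y_p$ is a clique. Otherwise $|X_p|+|Z_p|>n/2$ for every $p$; let $p$ be the first moment at which $|X_p|$ reaches $\lceil n/4\rceil$. That step occurs at a right endpoint, so $|Z_p|$ does not change there, and from $|X|+|Z|>n/2$ just before we get $|Z_p|\ge\lfloor n/4\rfloor$. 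Since $X_p$ and $Z_p$ consist of pairwise disjoint intervals, $\overline G\supseteq K_{\lfloor n/4\rfloor,\lfloor n/4\rfloor}$.

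For cographs I would induct on $n$ via the cotree, taken with alternating join/union labels; replacing $G$ by $\overline G$ if needed, assume $G$ is connected, so $G=G_1+\dots+G_t$ with $|G_1|\ge\dots\ge|G_t|$ and $t\ge2$. If $|G_1|\le\frac{3}{4}n$, the sizes $|G_1|,\dots,|G_t|$ split into two blocks each of total size at least $\lfloor n/4\rfloor$, and the complete join between blocks gives $K_{\lfloor n/4\rfloor,\lfloor n/4\rfloor}\subseteq G$. If $|G_1|>\frac{3}{4}n$, then $V(G)\setminus V(G_1)$ is completely joined to $V(G_1)$ and has fewer than $n/4$ vertices, while $G_1$ is disconnected; one descends into $G_1$, then into its largest component, and so on, recording at each step the vertex set that was split off — it is a module relative to the shrinking core, completely joined to it at a join step and completely anti-joined to it at a union step. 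The split-off sets on the join side are pairwise completely joined, so together with the final core they form a join of cographs (dually on the union side), and one finishes either by a balanced two-colouring of these pieces or, when one side has grown large, by combining a large clique (join side) or large independent set (union side) with one drawn from the core. What needs care here is the quantitative bookkeeping: one must show the split-off sets accumulate fast enough that the constant does not decay below $1/4$; this is the crux of the cograph case.

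For chordal graphs use a subtree representation $(T,\{T_v\})$, say with $T$ a clique tree. The subtrees through a node $x\in T$ pairwise intersect, so if $d(x):=|\{v:x\in T_v\}|\ge\frac{4}{9}n$ for some $x$, splitting that clique in half gives $K_{\lfloor 2n/9\rfloor,\lfloor 2n/9\rfloor}\subseteq G$ and we are done. Otherwise $d(x)<\frac{4}{9}n$ everywhere. Removing an edge of $T$ splits the vertices into those whose subtree lies strictly on one side, those strictly on the other, and those meeting both; subtrees on opposite sides are non-adjacent. So it suffices to find an edge with at least $\lfloor 2n/9\rfloor$ subtrees strictly on each side, or a node whose branches two-colour so each colour carries $\ge\lfloor 2n/9\rfloor$ subtrees, either of which yields $K_{\lfloor 2n/9\rfloor,\lfloor 2n/9\rfloor}\subseteq\overline G$. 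To produce such a witness, orient every edge of $T$ toward a side carrying fewer than $\lfloor 2n/9\rfloor$ subtrees strictly inside it; a tree has a sink $x^*$, and a double counting of the subtrees meeting each branch at $x^*$ — using $d(x^*)<\frac{4}{9}n$ — forces the degree of $x^*$ in $T$ to be $1$. So the only remaining case is when the natural centre of $T$ is a leaf, equivalently when one branch is very heavy; this is exactly the case that pins down the constant $\frac{4}{9}$ (hence $\frac{2}{9}$), since the thresholds $\frac{4}{9}n$ for a clique and $\frac{2}{9}n$ for a separation are chosen so that the two regimes just meet. Handling this heavy-branch/leaf case — by passing to the neighbouring node, re-examining the heavy branch, or running a short centroid walk that provably terminates in a balanced position — is the main obstacle, and is where the real work of the chordal part lies.
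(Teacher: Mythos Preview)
Your interval argument is correct and is essentially the paper's sweep.

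For cographs, you have the right outline but you stop exactly at the point that needs work: you write that ``the quantitative bookkeeping \dots\ is the crux'' and do not carry it out. The paper packages this descent as a lemma: for any cograph $G$ and any $U\subseteq V(G)$ there is a set $W$ with $\tfrac14|U|\le|U\cap W|\le\max\{\tfrac12|U|,1\}$ such that every $v\notin W$ is complete or anticomplete to $W$. The proof is precisely your cotree walk, tracking $|U\cap V(G_i)|$ and stopping the first time it (or the split-off part) lands in $[\tfrac14|U|,\tfrac12|U|]$; applied with $U=V(G)$ one gets $|W|\in[n/4,n/2]$, hence $|V\setminus W|\ge n/2$, and half of $V\setminus W$ lies on one side. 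So your approach is the paper's, but the lemma that makes it go through is missing.

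For chordal graphs there is a genuine gap. First, your orientation step is inverted: with edges pointed toward the side with fewer than $\lfloor 2n/9\rfloor$ subtrees, every leaf of $T$ is already a sink, so the conclusion ``the sink has degree~$1$'' is vacuous and narrows nothing. If you look at a \emph{source} instead, each branch $B_i$ at it has fewer than $2n/9$ subtrees; combined with $d(x^*)<4n/9$ and $\Delta(T)\le3$ this forces $\deg(x^*)=3$ with each branch in $(n/9,2n/9)$. That is the paper's first claim, but it is only the beginning. The paper then, inside each branch $C_i$, locates a secondary vertex $w_i$ with $|\Gamma^+(w_i)|\ge n/9$ and both sub-branches beyond $w_i$ smaller than $n/9$; splits $F_v$ into pieces $X_\emptyset,X_i,Y_{ij}$ according to which of $w_1,w_2,w_3$ a subtree contains; and uses three separate bi-clique observations to drive a chain of inequalities yielding five pairwise-separated families with total size exceeding $5n/9$ and individual sizes at most $2n/9$, which a final greedy balancing splits into two parts of size at least $2n/9$. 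None of this is hinted at in your sketch, and the ``short centroid walk'' you propose does not suffice: the whole point is that no single vertex or edge cut works, and one must combine the three auxiliary centroids $w_1,w_2,w_3$ simultaneously.
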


\medskip

We now turn our attention to a variation of the SEH-property. We say that a graph class $\mathcal{G}$ has the {\em colorful Erd{\H o}s--Hajnal property} (CEH-property) if there exists a constant $\varepsilon_c  = \varepsilon_c(\mathcal{G}) > 0$ such that for any graph $G\in \mathcal{G}$ on $n$ vertices and {\em for any} partition of the vertex set $V(G)$ into parts of size $\lceil n/2 \rceil$ and $\lfloor n/2 \rfloor$, $G$ or its complement $\overline{G}$ contains a bi-clique of size $2\left\lfloor \frac{\varepsilon_c n}{2}\right\rfloor$ whose vertex classes belong to different parts of the given partition of $V(G)$. In other words, we ask not only for a large bi-clique in $G$ or its complement $\overline{G}$, but for one that respects an arbitrarily preassigned equipartition of the vertex set of $G$.

It was shown in \cite{alon-semi2005} that semi-algebraic graphs satisfy the CEH-property, but this general and powerful result does not give particularly good bounds on the constant involved. Our next goal is to determine (asymptotically) optimal constants for the CEH-property with respect to the same graph classes as in Theorem \ref{thm:seh}. However, it will be evident that the class of chordal graphs {\em does not} satisfy the CEH-property, and therefore we consider a refinement of this class.

Recall that the {\em leafage} of a chordal graph $G$, denoted by $\ell(G)$, is the minimum number of leaves of the ambient tree in a subtree representation of $G$. For an integer $k\geq 2$ let $\mathcal{T}_k$ denote the family of chordal graphs whose leafage is at most $k$. That is, 
\[\mathcal{T}_k = \{G : G \text{ is chordal with } \ell(G)\leq k\}.\]
This gives us an infinite chain  $\mathcal{T}_2 \subset \mathcal{T}_3 \subset \cdots \subset \mathcal{T}_\infty$ where  $\mathcal{T}_2$ is the class of interval graphs and $\mathcal{T}_\infty$ is the class of chordal graphs. 

\begin{theorem}\label{thm:ceh} The following graph classes satisfy the colorful Erd{\H o}s--Hajnal property. 
\begin{enumerate}
    \item Interval graphs with constant $\varepsilon_c = 1/3$.
    \item Cographs with constant $\varepsilon_c = 1/4$.
    \item The class $\mathcal{T}_k$ with constant $\varepsilon_c = \frac{\ln k}{20k}$.
\end{enumerate}
\end{theorem}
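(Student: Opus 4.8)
The plan is to prove all three parts by a common template and then fight for the constants. In each case one decomposes the graph along a natural tree-like structure — the real line for interval graphs, the cotree for cographs, the ambient tree $T$ for $\mathcal T_k$ — and chooses a ``cut'' that splits the vertices into those meeting the cut (pairwise adjacent, hence feeding a \emph{complete} bipartite graph as soon as both colour classes are well represented there) and those lying in the branches off the cut (vertices in different branches are pairwise non-adjacent, hence feeding an \emph{anti-complete} bipartite graph as soon as one can two-colour the branches so that one colour captures many $A$-vertices and the other many $B$-vertices); if neither succeeds, a single branch is dominant and one recurses. For interval graphs this becomes a point-sweep: picking the cut $p$ so that $\lfloor n/6\rfloor$ members of $A$ lie entirely to its left, either $\lfloor n/6\rfloor$ members of $B$ lie entirely to its right (done, anti-complete), or — invoking the one-dimensional Helly fact that whenever two nonempty interval families are completely adjacent to each other at least one of them has a point common to all its members — a symmetric sweep from the right produces a point covered by $\lfloor n/6\rfloor$ members of each colour (done, complete); bookkeeping the floors yields $\varepsilon_c=1/3$. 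For cographs one inducts on the cotree: at each internal node the vertex sets $V_1,\dots,V_r$ of the children are pairwise completely adjacent or pairwise non-adjacent, and a short greedy argument shows one can two-colour them so that one colour captures $\ge\lfloor n/8\rfloor$ vertices of $A$ and the other $\ge\lfloor n/8\rfloor$ of $B$ \emph{unless} a single child contains all but fewer than $\lfloor n/8\rfloor$ vertices of $A$ and all but fewer than $\lfloor n/8\rfloor$ of $B$ — and then one recurses into that child, the inductive statement being set up so that $\varepsilon_c=1/4$ is preserved.

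The substantive part is (3). Fix a subtree representation over a tree $T$ with $k$ leaves, which we may take \emph{reduced}: it has at most $k-2$ branch vertices, and deleting them breaks $T$ into at most $2k-3$ paths, so that every subtree either lies inside one such path or contains a branch vertex. Put $\delta=\frac{\ln k}{20k}$ and $m=|F_1|\approx n/2$; we must find $\delta m$ members of $F_1$ and $\delta m$ members of $F_2$ that are completely, or anti-completely, adjacent to one another. If some branch vertex lies in $\ge\delta m$ members of each family we are done at once. Otherwise one shows that one of the following must hold: (a) a set $P$ of the path-pieces carries $\ge\delta m$ members of $F_1$ while a disjoint set $Q$ carries $\ge\delta m$ members of $F_2$ — an anti-complete bipartite graph; (b) the instance localises to a single path-piece, where part~(1) applies and comfortably beats $\delta m$; or (c) it localises to a subtree of $T$ spanning strictly fewer leaves, and one recurses.

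The main obstacle — and the reason why the crude pigeonhole over the at most $k$ pieces yields only the weaker $\Theta(1/k)$ — is that a naive recursion step in (c) costs a constant multiplicative factor, and over the up to $\log k$ levels of recursion this would wipe out the bound entirely. The fix is to amortise: the ``price'' of passing a branch vertex must be made merely additive, so that the total price of the recursion is controlled by the $k-2$ branch vertices together with the (after merging the cheap steps, at most logarithmically many) genuinely balanced splits, for an overall loss of only $O(\log k)$ and hence $\delta=\Theta\!\left(\frac{\ln k}{k}\right)$. Making this precise — in particular arranging that the members of the two families that touch branch vertices, possibly a large fraction of each family, can be absorbed or discarded without harming the count — is where essentially all the work is; the remaining steps are routine.

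Finally, the constants are optimal up to constant factors. For (3) one takes $T$ a subdivided star with $k$ legs, lets $F_1$ consist of the $k$ leaf-edges each with multiplicity $m/k$, and lets $F_2$ consist of the $m$ ``centre subtrees'' determined by $m$ independent uniformly random $(k/2)$-subsets of the legs; since a complete (or anti-complete) bipartite pattern with $s$ members per side uses at least $sk/m$ distinct legs, a first-moment estimate over the at most $\binom{k}{sk/m}\binom{m}{s}$ candidate patterns, each surviving the random choice with probability about $2^{-s^{2}k/m}$, shows that no such pattern respecting the colouring exists once $s$ exceeds a fixed multiple of $\frac{\ln k}{k}m$, matching Theorem~\ref{thm:ceh}(3) up to a constant factor. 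Small explicit constructions show that $1/3$ and $1/4$ in (1) and (2) likewise cannot be improved beyond lower-order terms.
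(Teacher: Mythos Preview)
What you have written is a plan, not a proof, and for part~(3) you say so yourself: ``Making this precise \ldots\ is where essentially all the work is.'' That is exactly right, and it is the main gap. Your amortisation idea --- pay an additive price at each of the $k-2$ branch vertices and a multiplicative price only at the $O(\log k)$ genuinely balanced splits --- is appealing, but you give no mechanism for it, and the obstacle you flag (subtrees that hit branch vertices may be a large fraction of both families and cannot simply be discarded) is precisely the hard part. The paper's argument is quite different and worth comparing: instead of recursing on branch vertices, it passes to the \emph{trunk} of $T$ (the subtree spanned by the nearest degree-$3$ vertex to each leaf), observes this has at most $\lfloor k/2\rfloor$ leaves, and does a three-way case split on how many members of each $F_i$ meet the trunk. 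The delicate case --- one family mostly meets the trunk, the other mostly lives in the outer paths --- is handled not by amortised recursion but by an explicit iterative construction: a chain $F^{(0)}\supseteq F^{(1)}\supseteq\cdots$ halving at each step, paired with subfamilies $H'_j$ of the outer paths, and the parameter $m\approx\log_2(k/\ln k)$ chosen to balance $|F^{(m)}|\gtrsim 2^{-m}|F_1|$ against $|\bigcup_{j\le m}H'_j|\gtrsim \tfrac{m}{k}|F_2|$. This is a concrete device, and your sketch does not contain it or an equivalent.

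There is a smaller but real issue in part~(2). Your recursion step hands the problem to a child containing at least $|A|-\lfloor n/8\rfloor$ vertices of $A$ and at least $|B|-\lfloor n/8\rfloor$ of $B$; applying the inductive hypothesis with $\varepsilon_c=1/4$ to that child gives only $\lfloor n'/8\rfloor$ with $n'\ge 3n/4$, which is strictly less than $\lfloor n/8\rfloor$, and the losses compound down the cotree. You say ``the inductive statement being set up so that $\varepsilon_c=1/4$ is preserved'' but do not say how. The paper sidesteps this entirely: it descends the cotree once (Lemma~\ref{lem:cograph}) to locate a single module $W$ with $|V_1\cap W|\in[\tfrac14|V_1|,\tfrac12|V_1|]$ that is homogeneous to every outside vertex, and then finishes in one step with no recursion. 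Your sketch for part~(1) is closer in spirit to the paper's four-threshold case analysis, though the role of the Helly fact you invoke is not made clear; the optimality constructions are essentially the same as the paper's.
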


\subsubsection*{Basic terminology and notations}
As usual, a graph $G$ is an ordered pair $G = (V, E)$ consisting of a finite vertex set $V = V(G)$ and an edge set $E = E(G) \subset \binom{V}{2}$. In particular, all graphs in this paper are simple, having no loops and no parallel edges. 
The \emph{complement graph} of a graph $G$ is the graph $\overline{G} = (V, \binom{V}{2} - E(G))$.
The disjoint union of two sets $A$ and $B$ is denoted by $A \cupdot B$, and the \emph{disjoint union} of two given graphs $G_1 = (V_1, E_1)$ and $G_2 = (V_2, E_2)$ is the graph $(V_1 \cupdot V_2, E_1 \cupdot E_2)$. With a slight abuse of notation we denote this by $G_1 \cupdot G_2$.

A complete bipartite graph is simply called a \emph{bi-clique}. A bi-clique $K_{m,n}$ is \emph{balanced} if $m = n$. We define the \emph{size} of bi-cliques only for balanced ones and the size of $K_{m, m}$ is $2m$. 

Given a family $F$ of nonempty sets, the \emph{intersection graph} of $F$ is a graph $G = (V,E)$ such that $V = F$ and two vertices $A$ and $B$ are adjacent in $G$ if and only if $A \cap B \neq \emptyset$.
Let $G$ be the intersection graph of a family $F$ of nonempty sets, and let $F_1$ and $F_2$ be two disjoint subfamilies of $F$.
We say $F_1$ and $F_2$ {\em correspond to a bi-clique in $G$} if every member of $F_1$ intersects every  member of $F_2$. Similarly, we say $F_1$ and $F_2$ {\em correspond to a bi-clique in $\overline{G}$} if every member of $F_1$ is disjoint from every member of $F_2$.

For a vertex $v\in V(G)$, the {\em neighborhood} of $v$, denoted by  $N(v)$, is the set of all vertices  adjacent to $v$. 
The \emph{closed neighborhood} of $v$ is $N[v] := N(v) \cup \{v\}$. The \emph{degree} of $v$, denoted by $\text{deg}(v)$, is the number of adjacent vertices to $v$, and $\Delta(G)$ denotes the maximum degree over all vertices in $G$.

For a tree $T$, a \emph{leaf} of $T$ is a vertex of degree $1$ in $T$. Given a pair of vertices $u, v \in V(T)$, then we denote by $P_T(u, v)$  the unique path in $T$ connecting $u$ and $v$. More generally, for a vertex set $U = \{u_1, u_2, \ldots, u_n\} \subseteq V(T)$, the inclusion-minimal subtree of $T$ that contains $U$ is denoted by $\tree_T(U)$ or $\tree_T(u_1, u_2, \ldots, u_n)$. In other words, $\tree_T(U) = \bigcup_{u, v \in U} P_T(u, v)$.


\subsubsection*{Outline of paper.} In section \ref{sec:examples} we provide examples guaranteeing that the constants in Theorems~\ref{thm:seh} and~\ref{thm:ceh} cannot be increased (except for the class $\mathcal{T}_k$ whose construction will be given later). 
In section \ref{sec:lemmas} we prove two lemmas that will be useful in the proofs of both Theorems \ref{thm:seh} and \ref{thm:ceh}. The first one deals with ``generic'' subtree representations of chordal graphs and the other is a basic lemma on cographs. The proof of Theorem \ref{thm:seh} is given in section \ref{sec:seh}, and  section \ref{sec:ceh} contains the proof of  Theorem \ref{thm:ceh} as well as a probabilistic construction that shows that our bound for the class $\mathcal{T}_k$ is asymptotically tight. We wrap up in section \ref{sec:concl} with some final remarks and open problems.


\section{Optimality of constants in Theorem~\ref{thm:seh}~and~Theorem~\ref{thm:ceh}}\label{sec:examples}

\begin{example*}
The constants in Theorem \ref{thm:seh} can in general not be increased. Let us first consider the case of interval graphs. Let $G_1$ be the intersection graph of the intervals
\[I_1 = [0, 1], \; I_2 = [1, 2], \; I_3 = [2, 3], \; I_4 = [3, 4].\] Note that $G$ is a graph on four vertices and the largest bi-clique in $G$ or $\overline{G}$ has size two. We can make arbitrarily large examples by taking $k$ copies of each of the intervals. The resulting intersection graph has $4k$ vertices and the largest bi-clique in $G$ or $\overline{G}$ has size $2k$.
    
Next we give a construction for the case of cographs. Obviously a complete graph is a cograph, and so the graph $G = \overline{(K_k \cupdot K_k \cupdot K_k)} \cupdot K_k$ is a cograph on $4k$ vertices and it is easily checked that the largest bi-clique in $G$ or its complement $\overline{G}$ has size at most $2k$.

Finally, we give a construction for the case of chordal graphs by giving a subtree representation. Let $T = K_{1, 3}$ and $V(T) = \{v, u_1, u_2, u_3\}$ where $v$ is the vertex of degree 3.
Let $G$ be the intersection graph of the following nine subtrees of $T$:
\begin{align*}
T_1 = T_2 &= u_1,& T_3 = T_4 &= u_2,& T_5 = T_6 &= u_3,\\
T_7 &= P_T(u_1, u_2),& T_8 &= P_T(u_1, u_3),& T_9 &= P_T(u_2, u_3),
\end{align*}
where $P_T(u,v)$ denotes the unique path in $T$ connecting vertices $u$ and $v$. Thus $G$ is a graph on nine vertices and it is easily checked that the largest bi-clique contained in $G$ or $\overline{G}$ has size four. To obtain arbitrarily large examples simply take $k$ copies of each subtree. 

\end{example*}

\begin{example*}
The constants in Theorem \ref{thm:ceh} can in general not be increased. Here we give examples for the case of interval graphs and cographs. These are similar to the ones for the SEH-property. For the class $\mathcal{T}_k$ we will give an asymptotic matching bound, but the argument is a bit more involved and is given in section \ref{sec:ceh}. Note that this implies that the class of chordal graphs (which is the class $\mathcal{T}_\infty$) does not satisfy the CEH-property for any fixed constant $\varepsilon_c>0$. 

Here is a construction for interval graphs. 
Let $G$ be the intersection graph of the following intervals
\begin{align*}
I_1  &= [0, 1], & I_2 &= [2, 3],   & I_3 &= [4, 5],\\
J_1  &= [1, 2], & J_2 &= [3, 4], 
&J_3 &= [5, 6], 
\end{align*}
and consider the partition of the vertex set of $G$ containing the $I_m$ intervals in one part and the $J_m$ intervals in the other. Thus $G$ is a graph on six vertices and it is easily seen that the largest bi-clique in $G$ that respects this given partition has size two. As before we can make arbitrarily large example by taking $k$ copies of each of the intervals. 

Here is a construction for cographs.
Let $H$ denote the bipartite graph in the figure  below. \begin{figure}
    \centering
\begin{tikzpicture}
\begin{scope}
\coordinate (v1) at (0,0);
\coordinate (v2) at (1,0);
\coordinate (v3) at (2,0);
\coordinate (v4) at (3,0);
\coordinate (v5) at (0,1);
\coordinate (v6) at (1,1);
\coordinate (v7) at (2,1);
\coordinate (v8) at (3,1);
\fill (v1) circle (.3ex);
\fill (v2) circle (.3ex);
\fill (v3) circle (.3ex);
\fill (v4) circle (.3ex);
\fill (v5) circle (.3ex);
\fill (v6) circle (.3ex);
\fill (v7) circle (.3ex);
\fill (v8) circle (.3ex);
\draw (v1)--(v6);
\draw (v1)--(v7);
\draw (v1)--(v6);
\draw (v2)--(v5);
\draw (v2)--(v6);
\draw (v3)--(v5);
\draw (v3)--(v7);
\draw (v4)--(v8);
\node[below] at (v1) {\footnotesize $v_1$};
\node[below] at (v2) {\footnotesize $v_2$};
\node[below] at (v3) {\footnotesize $v_3$};
\node[below] at (v4) {\footnotesize $v_4$};
\node[above] at (v5) {\footnotesize $v_5$};
\node[above] at (v6) {\footnotesize $v_6$};
\node[above] at (v7) {\footnotesize $v_7$};
\node[above] at (v8) {\footnotesize $v_8$};
\end{scope}
\end{tikzpicture}
    \label{fig:my_label}
\end{figure}
Note that the largest bi-clique that respects the vertex partition of $H$ has size two.

Now consider the following cograph on vertices $\{v_1, \dots, v_8\}$. Let $G_1$ be the graph on $\{v_1, v_5\}$ without an edge, let 
$G_2$ be disjoint union of edges $v_2v_6$ and $v_3v_7$, and define $G_3 = \overline{G_1} \cupdot \overline{G_2}$. 
Finally, let $G_4$ be the edge $v_4 v_8$ and let $G_5 = \overline{G_3} \cupdot G_4$. It is easily seen that the induced bipartite subgraph between parts $\{v_1, \dots, v_4\}$ and $\{v_5, \dots, v_8\}$ is isomorphic to $H$.

For each $k \geq 1$, a general example $G$ on $8k$ vertices can be made by replacing each $v_i$ by \emph{any} cograph $H_i$ on $k$ vertices and partitioning $U_1 = \bigcup_{i = 1}^4 V(H_i)$ and $U_2 = \bigcup_{i = 5}^8 V(H_i)$. Note that $H_i$ can be any cograph because its edges disappear when we restrict $G$ to the edges between two parts $U_1$ and $U_2$.
\end{example*}


\section{Auxiliary results} \label{sec:lemmas}

We start this section with a simple lemma regarding subtree representations of chordal graphs. This will be useful later on in the proofs of Theorems~\ref{thm:seh} and~\ref{thm:ceh}. Recall that $\mathcal{T}_k$ is the class of chordal graphs with leafage at most $k$.

\begin{lemma} \label{lem:cubic}
    Any graph $G\in \mathcal{T}_k$ has a subtree representation as the intersection graph of a family $\{T_i\}$  of subtrees of an ambient tree $T$ where 
    \begin{enumerate}
        \item \label{lem:cubic-1} The ambient tree $T$ has $k$ leaves and maximum degree $3$.
        \item \label{lem:cubic-2} No two subtrees $T_i$ and $T_j$ share a common leaf.  
    \end{enumerate}
\end{lemma}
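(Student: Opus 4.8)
The plan is to start from an arbitrary subtree representation of $G\in\mathcal{T}_k$ realizing the leafage, so the ambient tree $T_0$ has at most $k$ leaves, and then to massage it in two stages. First I would handle condition \eqref{lem:cubic-1}: if a node $v$ of $T_0$ has degree $d\geq 4$, I want to ``split'' $v$ into a small path (or binary tree) on several new nodes of degree $\le 3$, redistributing the $d$ incident edges among them, and extend each subtree $T_i$ that passed through $v$ so that it uses this entire little gadget (it still induces a connected subtree, and since every $T_i$ containing $v$ contains the whole gadget, and every $T_i$ not containing $v$ is untouched, all pairwise intersections are preserved). Crucially, splitting a high-degree node into degree-$\le 3$ nodes does not create new leaves: the number of leaves of the tree is unchanged because we only subdivide/expand an internal vertex. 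Iterating over all high-degree vertices produces an ambient tree with maximum degree $3$ and still at most $k$ leaves; if it has fewer than $k$ leaves I can attach extra pendant edges (with no subtree using them) to bring the leaf count up to exactly $k$.

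Next I would enforce condition \eqref{lem:cubic-2}. The idea is local surgery at each leaf $w$ of the current tree $T$: subdivide the (unique) edge at $w$ by inserting a new node $w'$, so the edge $xw$ becomes $xw'w$, and then for every subtree $T_i$ that currently contains $w$, replace $w$ in $T_i$ by $w'$ — that is, ``pull back'' each such subtree so that its endpoint sits at $w'$ instead of at the leaf $w$. Since all these subtrees contained $w$, they all contained the neighbor of $w$ as well (unless $T_i=\{w\}$, a single-vertex subtree, which we handle by instead letting $T_i=\{w'\}$); hence after the modification each such $T_i$ is still a connected subtree, now ending at $w'$, and any two of them that intersected still intersect (their intersection near $w$ only shrinks from possibly-$\{w\}$ to the still-common node, and no intersection is destroyed because the relative position of everything else is unchanged). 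No subtree now contains the leaf $w$. Doing this independently at each of the $k$ leaves — the surgeries at different leaves are on disjoint edges and don't interfere — yields a representation where no $T_i$ contains any leaf of $T$, so in particular no two $T_i,T_j$ share a leaf. Finally I'd observe that subdividing edges does not change the number of leaves or raise the maximum degree, so conditions \eqref{lem:cubic-1} and \eqref{lem:cubic-2} hold simultaneously.

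I expect the main obstacle to be making the two reductions simultaneously valid and checking carefully that the intersection graph is genuinely preserved at every step — in particular, the bookkeeping for degenerate subtrees (single vertices or single edges sitting right at a leaf or a high-degree node) and the verification that ``expanding'' a subtree across a gadget never introduces a spurious intersection with a subtree that was previously disjoint from it. A clean way to argue the latter is to note that in all our modifications the map from old tree to new tree sends each old subtree to a subtree whose preimage is exactly the old one, and that for any two old subtrees $T_i,T_j$ we have $T_i\cap T_j\ne\emptyset$ if and only if the images intersect; this can be seen by checking that the modifications are, up to the pendant-edge padding, compositions of edge subdivisions and ``inflations'' of a single vertex into a tree, each of which is easily seen to preserve the intersection pattern. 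The leaf-count invariance under both operations is the other point to state explicitly, since it is what keeps us inside $\mathcal{T}_k$.
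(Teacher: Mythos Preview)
Your treatment of condition \eqref{lem:cubic-1} is correct and essentially identical to the paper's: replace a degree-$d$ vertex by a path on $d$ new vertices, hook the old neighbors to distinct path vertices, and let every subtree that used the old vertex swallow the whole path. The paper does not explicitly pad with pendant edges to reach exactly $k$ leaves, but your remark about doing so is harmless.

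Your treatment of condition \eqref{lem:cubic-2} rests on a misreading of the statement. The lemma asks that no vertex be a leaf of two different subtrees $T_i,T_j$ --- a leaf \emph{of the subtree}, not a leaf of the ambient tree $T$. (This is how the lemma is used later: for interval graphs it becomes ``no two intervals share a common endpoint.'') Your surgery only takes place at leaves of $T$ and only guarantees that no $T_i$ reaches a leaf of $T$; it does nothing about the generic situation where an interior vertex $v$ of $T$ is simultaneously an endpoint of $T_i$ and an endpoint of $T_j$. For a concrete failure, take $T$ to be the path $a\,b\,c\,d\,e$ with $T_1=P_T(a,c)$ and $T_2=P_T(c,e)$: the vertex $c$ is a leaf of both subtrees, yet $c$ is internal in $T$, so your procedure never touches it.

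The paper's fix is local at such a shared leaf $v$ of $T_i$ and $T_j$: pick a neighbor $u$ of $v$ not in $T_i$ (adding a pendant at $v$ first if $v$ happens to be a leaf of $T$), subdivide $uv$ into $u\,w\,v$, and then extend $T_i$ by the edge $vw$. Now $T_i$ ends at $w$ while $T_j$ still ends at $v$, the intersection pattern is unchanged, and the total number of shared leaves among all pairs strictly decreases; iterate. You need an argument of this shape --- separating the endpoints of the two subtrees --- rather than one that merely pulls all subtrees off the leaves of $T$.
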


\begin{proof}
The fact that $G$ has a subtree representation in an ambient tree $T$ with at most $k$ leaves follows from the well-known result of Gavril \cite{Gavril1974} and the definition of $\ell(G)$. We fix such a subtree representation $\{T_i\}$ and show how to modify the ambient tree $T$ and the subtree representation without changing the intersection graph. 
    
We first show how to reduce the maximum degree of the ambient tree $T$. Fix a vertex $v \in V(T)$ of degree $d \geq 4$. Let $C_1, C_2, \ldots, C_d$ be the components of $T - v$,  and let  $u_i$ be the unique neighbor of $v$ in $C_i$ for each $i \in [d]$. Introduce new vertices $v_1$, $v_2$, $\dots$, $v_d$ connected by edges such that they form a path $P$.

We construct new tree $T'$ on the vertex set $(V(T) - \{v\}) \cup \{v_1, v_2, \ldots, v_d\}$. Edges in each $C_i$ remain the same in $T'$, each $u_i$ is connected to $v_i$ by an edge, and finally add the edges of $P$ to $T'$. In other words,
\[
E(T') = E(C_1) \cup E(C_2) \cup \cdots \cup E(C_d) \cup \{u_1 v_1, \ldots, u_d v_d\} \cup E(P).\] It is obvious that $T'$ is a tree.
Now we construct the new subtree family $\{T_i'\}$. If the original subtree $T_i$  contains the vertex $v$, then we set  $V(T_i')$ $= (V(T_i) - \{v\})$ $\cup$ $\{v_1$, $\ldots$, $v_d\}$. Otherwise we set 
$V(T_i') = V(T_i)$. Finally let $T_i'$ be the minimal subtree of $T'$ that contains the vertices $V(T_i')$. It is easily seen that $\{T_i\}$ and $\{T_i'\}$ have isomorphic intersection graphs. 

Note that the new vertices $v_1, \dots, v_d$ all have degree at most $3$ in $T'$, and that $T'$ has the same number of leaves as $T$. Therefore repeating the process until there are no more vertices of degree greater than 3 proves claim \eqref{lem:cubic-1}.

\smallskip

To prove claim \eqref{lem:cubic-2}, suppose
$v \in V(T)$ is a common leaf of $T_i$ and $T_j$. If $v$ is a leaf of $T$, then we first modify $T$ as follows; add a new vertex $u$ to $T$ and connect it to $v$. (We do not change any subtrees yet.) If $v$ is not a leaf of $T$, then $T$ remains unchanged.

Now let $u$ be a neighbor of $v$ which is not a vertex of $T_i$ (which must exist after possibly making the change above). Subdivide the edge $uv$ once; so the edge $uv$ is replaced by the path $uwv$ (in both $T$ and every subtree containing the original edge $uv$), then add the edge $vw$ to $T_i$.

In effect, this reduces the total number of common leaves between subtrees, while the intersection graph remains the same. We repeat the same procedure until no two subtrees share a common leaf.
\end{proof}

The next lemma concerning cographs will be needed for the proofs of both Theorems \ref{thm:seh} and \ref{thm:ceh}. Consider a vertex $v$ of a graph $G$ and subset $W\subset V(G)\setminus \{v\}$. We say that $W$ {\em conforms} to $v$ if either $W\subset N(v)$ or $W\cap N(v) = \emptyset$.

\begin{lemma}\label{lem:cograph}
Let $G$ be a cograph on the vertex set $V$. For any nonempty vertex set $U \subseteq V$, there exists a subset $W \subseteq V$ such that
\begin{enumerate}
    \item $\frac{1}{4}\size{U} \leq \size{U \cap W} \leq \max\{\frac{1}{2}\size{U}, 1\}.$
    \item For every $v \in V - W$, $W$ conforms to $v$.
\end{enumerate}
\end{lemma}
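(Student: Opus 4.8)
The plan is to exploit the recursive structure of cographs: every cograph $G$ on at least two vertices is either a disjoint union $G_1 \cupdot G_2$ or a join $\overline{\overline{G_1}\cupdot\overline{G_2}}$ of two smaller cographs, and in either case the vertex classes $V(G_1), V(G_2)$ partition $V(G)$ into two nonempty parts such that \emph{every} vertex of one part either sees all of the other part or none of it. I would prove the lemma by induction on $|V(G)|$. The base case $|V(G)| = 1$ is trivial: take $W = V = U$, so $|U\cap W| = |U| = 1$, which satisfies $\frac14|U|\le 1 = |U\cap W|\le\max\{\frac12|U|,1\} = 1$, and there are no vertices in $V - W$.

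For the inductive step, write $V = V_1 \cupdot V_2$ as above, with both parts nonempty, and consider $U_1 = U\cap V_1$ and $U_2 = U\cap V_2$; at least one is nonempty, and by relabelling assume $|U_1|\ge|U_2|$, so $|U_1|\ge\frac12|U|$. The idea is to recurse \emph{inside} $V_1$: apply the induction hypothesis to the cograph $G[V_1]$ with the vertex set $U_1$, obtaining $W_1\subseteq V_1$ with $\frac14|U_1|\le|U_1\cap W_1|\le\max\{\frac12|U_1|,1\}$ such that $W_1$ conforms to every $v\in V_1 - W_1$. I would then set $W = W_1$. Since $|U\cap W| = |U_1\cap W_1|$ and $|U_1|\ge\frac12|U|$, the lower bound gives $|U\cap W|\ge\frac14|U_1|\ge\frac18|U|$ — which is not quite the required $\frac14|U|$, so this naive recursion must be adjusted. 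The fix is to recurse on $V_1$ only when doing so does not lose too much: more carefully, one should recurse inside $V_1$ but track the bound as a fraction of $|U_1|$ rather than $|U|$, and handle separately the case where we should instead just take $W = V_1$ wholesale.

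So the cleaner approach: if $|U_1| = |U|$ (i.e. $U\subseteq V_1$), recurse inside $V_1$ and the bounds transfer verbatim, noting that conformity to vertices of $V_2$ is automatic because $W_1\subseteq V_1$ and every vertex of $V_2$ sees either all or none of $V_1\supseteq W_1$. If instead $U_2\ne\emptyset$, then $|U_1|\le|U| - 1$, and I would take $W = V_1$ itself: then $|U\cap W| = |U_1|$, and I need $\frac14|U|\le|U_1|\le\max\{\frac12|U|,1\}$. The upper bound may fail if $|U_1| > \frac12|U|$, so in that subcase one recurses inside $V_1$ again. This suggests the right formulation of the induction is: always recurse into the part $V_i$ containing the larger share of $U$, take the $W$ produced there, and verify the fraction bound survives — the point being that when we pass from $U$ to $U_i$ we only need the factor-$\frac14$ lower bound against $|U_i|$, and $|U_i|\ge\frac12|U|$ only in the balanced-ish case, whereas if $U$ is entirely on one side there is no loss at all.

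The main obstacle is precisely this bookkeeping of the size bounds through the recursion: one must choose, at each step, whether to descend into a subpart (safe for the \emph{upper} bound, since it can only shrink $|U\cap W|$) or to stop and output the whole subpart $V_i$ (safe for the \emph{lower} bound), and show the two options always cover all cases so that both inequalities hold simultaneously. The conformity condition, by contrast, is essentially free: any $W$ that is contained in one part $V_i$ of a disjoint-union/join decomposition automatically conforms to every vertex outside $V_i$, and inside $V_i$ it conforms by the inductive hypothesis; one just has to observe that $V - W = (V_i - W)\cupdot V_{3-i}$ and check conformity on each piece. I expect the write-up to be a short induction with a two- or three-way case split on how $U$ distributes across the top-level decomposition of $G$.
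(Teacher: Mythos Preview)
Your overall strategy---descend through the cotree, always into the side carrying more of $U$, and at some level output either that side or its sibling as $W$---is exactly what the paper does. But the proposal never closes the gap you yourself flag. The naive induction on $|V(G)|$, applying the lemma to $(G[V_1],U_1)$ when $|U_1|>\tfrac12|U|$, genuinely fails: the hypothesis only returns $|U\cap W|\ge\tfrac14|U_1|$, which can be as small as roughly $\tfrac18|U|$, and nothing in the inductive statement lets you recover the lost factor. Your remark that ``one must choose, at each step, whether to descend\ldots or to stop\ldots and show the two options always cover all cases'' is a description of the task, not a proof that it can be done.

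What is missing is the threshold-crossing argument, and this is the heart of the paper's proof. Unroll the descent into a chain $V(G)=V(G_1)\supset V(G_2)\supset\cdots\supset V(G_m)$ with $|V(G_m)|=1$, where at each step $G_i$ (or its complement) splits as $G_{i+1}\cupdot H_{i+1}$ with $|U\cap V(G_{i+1})|\ge|U\cap V(H_{i+1})|$. Then $|U\cap V(G_i)|$ drops from $|U|$ to at most $1$, so some step $i$ has $|U\cap V(G_i)|>\tfrac12|U|$ but $|U\cap V(G_{i+1})|\le\tfrac12|U|$. Now either some $H_j$ already satisfies $\tfrac14|U|\le|U\cap V(H_j)|\le\tfrac12|U|$ (take $W=V(H_j)$), or every $|U\cap V(H_j)|<\tfrac14|U|$, in which case $|U\cap V(G_{i+1})|=|U\cap V(G_i)|-|U\cap V(H_{i+1})|>\tfrac12|U|-\tfrac14|U|=\tfrac14|U|$ and $W=V(G_{i+1})$ works. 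The conformity check is then exactly as you describe. Once you abandon the induction-on-$|V|$ framing in favor of this direct search through the cotree levels, your plan becomes a complete proof---and indeed the paper's proof.
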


\begin{proof}
Let $\mathcal{P}$ denote the class of all cographs. Recall the inductive definition of $\mathcal{P}$:
\begin{enumerate}[(i)]
    \item The graph $K_1$ on one isolated vertex belongs to $\mathcal{P}$.
    \item If $G \in \mathcal{P}$, then its complement $\overline{G}$ also belongs to $\mathcal{P}$.
    \item If $G, H \in \mathcal{P}$, then their disjoint union $G \cupdot H$ also belongs to $\mathcal{P}$.
\end{enumerate}

For our given graph $G\in \mathcal{P}$ we inductively define sequences of cographs $\{G_i\}$ and $\{H_i\}$ as follows.
Start by setting $G_1 = G$. For $G_i$ with $i \geq 1$, either $G_i$ or $\overline{G_i}$ equals the disjoint union of two cographs $G_{i+1} \cupdot H_{i+1}$ by the inductive construction of $G_i$. Select $G_{i+1}$ so that $\size{U \cap V(G_{i+1})} \geq \size{U \cap V(H_{i+1})}$.
Since the order of $G_i$ is strictly decreasing in $i$, the sequence $\{G_i\}$ is finite and terminates when $G_m = K_1$ for some integer $m \geq 1$. This defines the two sequences $G_1, G_2, \ldots, G_m$ and $H_2, H_3, \ldots, H_m$.

\smallskip

Note that if $\size{U} = 1$, then we can prove the lemma simply by taking $W = V$. Thus we may assume $\size{U} \geq 2$.

\smallskip

If there exists an $i \geq 2$ such that $\frac{1}{4}\size{U} \leq \size{U \cap V(H_i)} \leq \frac{1}{2}\size{U}$, then pick the smallest such $i$ and set $W = V(H_i)$.

\smallskip

If no such $i$ exists, then we must have  $\size{U \cap V(H_i)}$ $< \frac{1}{4}\size{U}$ for every $2\leq i\leq m$. The case that $\size{U \cap V(H_i)}$ $> \frac{1}{2}\size{U}$ can not happen since this would contradict the disjointness of $U \cap V(G_i)$ and $U \cap V(H_i)$. 
Consider $\size{U \cap V(G_i)}$. We have $\size{U \cap V(G_1)} = \size{U}$ and $\size{U \cap V(G_m)} \leq 1$ on the other hand. Therefore, there is some $i$ such that $\size{U \cap V(G_i)} > \frac{1}{2}\size{U}$ but $\size{U \cap V(G_{i+1})} \leq \frac{1}{2}\size{U}$. Now we know that
\[\size{U \cap V(G_{i+1})} = \size{U \cap V(G_i)} - \size{U \cap V(H_{i+1})} > {\textstyle\frac{1}{2}n - \frac{1}{4}n = \frac{1}{4}n},
\]
and we set $W = V(G_{i+1})$.

\smallskip

It remains to show that for every $v \in V - W$, $W$ conforms to $v$. First consider the case $W = V(G_i)$ for some $i$. By the construction of the sequence $\{G_i\}$, we
have the series of inclusions $V(G_1) \supset V(G_2) \supset \cdots \supset V(G_{i-1}) \supset V(G_{i})$. Let $j$ be the largest index such that $v \in V(G_j)$. Obviously $1 \leq j < i$, and since $V(G_j) = V(G_{j+1}) \cupdot V(H_{j+1})$, the choice of $j$ implies that $v \in V(H_{j+1})$. As $G_{j+1} \cupdot H_{j+1}$ equals $G_j$ or $\overline{G_j}$, 
$V(G_{j+1})$ conforms to any vertex in $V(H_{j+1})$.
This shows the desired property since $W = V(G_i) \subseteq V(G_{j+1})$.

The remaining case $W = V(H_i)$ for some $i$ can be proved in a similar way, since every $v \in V - W$ is either in $V(G_i)$ or in $V(H_j)$ for some $1 \leq j < i$.
\end{proof}


\section{The strong Erd{\H o}s--Hajnal property} \label{sec:seh}

In this section we prove Theorem \ref{thm:seh}. The cases of interval graphs and cographs are simple and will be treated first. The case of chordal graphs is more involved and takes up the majority of the proof.

\begin{proof}[Proof of Theorem \ref{thm:seh} for interval graphs]
Let $G$ be an interval graph on $n$ vertices. 
By Lemma \ref{lem:cubic} we may assume $G$ has a representation as the intersection graph of a family $\{I_j\}_{j=1}^n$ of compact intervals on the real line such that no two intervals share a common endpoint. 
For a point $x\in \mathbb{R}$, let $L(x)$ denote the number of intervals $I_j$ whose rightmost endpoint is strictly less than $x$, and let $R(x)$ denote the number of intervals whose leftmost endpoint is strictly greater than $x$. 
Observe that for all sufficiently small $x$ we have $L(x) = 0$ and $R(x) = n$, and for all sufficiently large $x$ we have $L(x) = n$ and $R(x) = 0$. 
Since the intervals all have distinct endpoints it follows that $L(x)$ is weakly increasing and changes in increments of 1, 
while $R(x)$ is weakly decreasing and changes in increments of -1. Moreover these changes happen at distinct $x$-values.
Therefore there exists a point $x_0$ such that $M = L(x_0) = R(x_0)$. If $M\geq n/4$ then there is a bi-clique in $\overline{G}$ of size $2\lfloor n/4 \rfloor$. If $M < n/4$ then $G$ contains a clique of size $n/2$ and consequently $G$ contains a bi-clique of size $2\lfloor n/4 \rfloor$.
\end{proof}

\begin{proof}[Proof of Theorem \ref{thm:seh} for cographs]
We apply Lemma~\ref{lem:cograph} to $U = V(G)$. This gives us a subset $W \subseteq V$ with $\frac{1}{4}n \leq \size{W} \leq \frac{1}{2}n$ such that $W$ conforms to every $v \in V - W$. Note that $\size{V - W} \geq \frac{1}{2}n$, and so therefore there exists a subset $X\subset V-W$ with $|X|\geq n/4$ such that either $W\subset N(x)$ for every $x\in X$, or such that $W\cap N(x) = \emptyset$ for every $x\in X$.  This implies $G$ or $\overline{G}$ contains a bi-clique of desired size.
\end{proof}

\begin{proof}[Proof of Theorem \ref{thm:seh} for chordal graphs] Let $G$ be a chordal graph on $n$ vertices, and for contradiction, we assume that neither $G$ nor $\overline{G}$ contains a bi-clique of size $2\left\lfloor\frac{2}{9}n\right\rfloor$.
By Lemma \ref{lem:cubic} we may assume that $G$ has a subtree representation as an intersection graph of a family of subtrees $F = \{T_i\}_{i=1}^n$ of an ambient tree $T$ where $\Delta(T) \leq 3$ and no two subtrees $T_i$ and $T_j$ share a common leaf.
We may assume the maximum degree $\Delta(T) =3$, otherwise $T$ is a path (or possibly a single vertex) and therefore $G$ is an interval graph which was treated above.

For each $v \in V(T)$, let $F_v\subset F$ be the collection of subtrees that contain the vertex $v$. If $\size{F_v} \geq \frac{4}{9}n \geq 2\left\lfloor\frac{2}{9}n\right\rfloor$ for some $v$, then the members of $F_v$ form a clique in $G$ and we are done. Therefore assume $\size{F_v} < \frac{4}{9}n$ for every $v$.

For any vertex $v\in V(T)$,  let $C_i(v)$ be the components of $T - v$, and let $F_i(v)\subset F$ be the family of subtrees contained in $C_i(v)$. 
(Note that we allow for the possibility that some of the  $C_i(v)$ and/or $F_i(v)$ are empty.)
Clearly, for every vertex $v\in V(T)$ we have $F = F_v \cupdot F_1(v) \cupdot F_2(v) \cupdot F_3(v)$. 

\begin{claim}\label{c:subtreespt}
There exists a degree 3 vertex $v \in V(T)$ such that $\frac{1}{9}n \leq \size{F_i(v)} \leq \frac{2}{9}n$ for every $i = 1,2,3$.
\end{claim}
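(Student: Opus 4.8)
The plan is to use a continuity/discrete intermediate value argument, analogous to the one used for interval graphs, but now moving along the tree $T$ rather than along the real line. For a vertex $v\in V(T)$ of degree $3$, write the three components of $T-v$ as $C_1(v),C_2(v),C_3(v)$, ordered so that $\size{F_1(v)}\geq\size{F_2(v)}\geq\size{F_3(v)}$; recall $F=F_v\cupdot F_1(v)\cupdot F_2(v)\cupdot F_3(v)$ and that $\size{F_v}<\tfrac{4}{9}n$, so $\size{F_1(v)}+\size{F_2(v)}+\size{F_3(v)}>\tfrac{5}{9}n$. I would look for a $v$ where the largest part $\size{F_1(v)}$ is at most $\tfrac{2}{9}n$; then all three parts lie in $[0,\tfrac{2}{9}n]$ and since their sum exceeds $\tfrac{5}{9}n$, each must also be at least $\tfrac{5}{9}n-\tfrac{4}{9}n=\tfrac{1}{9}n$, which is exactly the claim. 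So it suffices to find a degree-$3$ vertex whose heaviest branch has at most $\tfrac{2}{9}n$ subtrees.

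**The search procedure.** To find such a vertex I would run the standard ``centroid-type'' descent on $T$: start at an arbitrary degree-$3$ vertex. If its heaviest branch $C_1(v)$ already satisfies $\size{F_1(v)}\leq\tfrac{2}{9}n$, we are done. Otherwise $\size{F_1(v)}>\tfrac{2}{9}n$, and we step from $v$ to its neighbor $u$ inside that heavy component along the path toward it. The key monotonicity fact is that when we move from $v$ into the heavy branch, the ``weight behind us'' (the family of subtrees now lying entirely in the component of $T-u$ containing $v$, together with $F_v$-type corrections) strictly increases, while the heavy branch's weight strictly decreases; since these quantities change by bounded increments and cannot both stay above $\tfrac{2}{9}n$ indefinitely (the total is only $n$), the process terminates. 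Because $\Delta(T)\le 3$ and no subtree contains a leaf shared with another subtree, the only vertices we ever need to stop at are degree-$3$ vertices — at a degree-$\le 2$ vertex we would have $G$ essentially an interval graph along that part, so we can always keep descending until we reach a branching vertex with the desired balance.

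**Making the increments work.** The main technical point — and the step I expect to be the real obstacle — is controlling how $\size{F_i(v)}$ changes as we move along an edge $vu$ of $T$, because subtrees straddling the edge $vu$ are counted in neither $F_i(v)$ nor $F_i(u)$ in a naive way, and the splitting of $F$ induced by $v$ versus by $u$ differs by exactly these straddling subtrees plus the ``hub'' families $F_v, F_u$. I would track, for the oriented edge from $v$ toward $u$, the quantity $a(v\to u)=\size{\{T_i\in F : T_i\subseteq \text{component of }T-(\text{edge }vu)\text{ on the }v\text{ side}\}}$; this is monotone as we walk a path, increases from $0$ at the far end to $n$ (minus edge-straddlers) at the other, and changes only through vertices. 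At a degree-$3$ vertex $v$, the three relevant quantities $\size{F_1(v)},\size{F_2(v)},\size{F_3(v)}$ are obtained from the three edge-quantities $a(\cdot\to v)$ by subtracting off subtrees through $v$, so one deduces that somewhere along the descent path the heavy branch weight drops from above $\tfrac{2}{9}n$ to at most $\tfrac{2}{9}n$ without the ``reverse'' weight ever exceeding $\tfrac{2}{9}n$ before that, and at that vertex all three parts are pinned into $[\tfrac{1}{9}n,\tfrac{2}{9}n]$ using $\size{F_v}<\tfrac{4}{9}n$. I would write this carefully as: pick $v$ minimizing $\size{F_1(v)}$ among all degree-$3$ vertices (the ``branch-centroid''); if $\size{F_1(v)}>\tfrac{2}{9}n$, the neighbor $u$ into $C_1(v)$ is either degree $\le 2$ — forcing an interval-graph sub-argument — or degree $3$ with strictly smaller heaviest branch, contradicting minimality. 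This contradiction argument is cleaner than tracking increments explicitly, so that is the route I would take, with the edge-quantity bookkeeping held in reserve to justify ``strictly smaller heaviest branch.''
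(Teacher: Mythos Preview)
Your opening reduction is correct and matches the paper: once $\size{F_1(v)}\le\tfrac{2}{9}n$ at a degree-$3$ vertex, the bound $\size{F_v}<\tfrac{4}{9}n$ forces $\size{F_3(v)}>\tfrac{5}{9}n-\tfrac{4}{9}n=\tfrac{1}{9}n$, so all three parts land in $[\tfrac{1}{9}n,\tfrac{2}{9}n]$. The gap is in the minimality step at the end. Your claim that the neighbour $u$ in $C_1(v)$ has \emph{strictly smaller} heaviest branch is false in general: the heaviest component of $T-u$ may be the one containing $v$, whose weight is $\size{F_2(v)}+\size{F_3(v)}+\size{F_v\setminus F_u}$, and this can exceed $\size{F_1(v)}$ (take $\size{F_1(v)}$ just above $\tfrac{2}{9}n$ and many members of $F_v$ running between $C_2(v)$ and $C_3(v)$, hence missing $u$). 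So no contradiction to minimality arises, and your ``edge-quantity bookkeeping'' cannot rescue a claim that is simply not true. The missing ingredient is precisely the bi-clique escape hatch you never invoke during the descent: when the heavy branch at $u$ is the $v$-side, $F_1(u)$ and $F_1(v)$ sit on opposite sides of the edge $uv$, both of size at least $\tfrac{2}{9}n$, and yield a bi-clique in $\overline{G}$.

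The paper executes this idea globally as an orientation argument and thereby sidesteps your loose ends about degree-$\le 2$ vertices and termination. One first verifies that \emph{every} vertex $w$ of $T$ (regardless of degree) satisfies $\size{F_1(w)}\ge\tfrac{2}{9}n>\size{F_2(w)}$: the first inequality is your reduction at degree-$3$ vertices and a straight size count at degree $\le 2$, while the second holds because two branches of size $\ge\tfrac{2}{9}n$ already give a bi-clique in $\overline{G}$. Each vertex then has a unique outgoing edge, namely toward $C_1(w)$. If some edge $uv$ receives zero or two orientations, then $C_1(u)$ and $C_1(v)$ lie in disjoint halves of $T$ and again give a bi-clique. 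Otherwise every vertex has outdegree exactly one and every edge carries exactly one orientation, impossible in a tree since $\size{V(T)}>\size{E(T)}$. This is your descent with the backtracking step replaced by the bi-clique alternative, packaged so that degree-$2$ vertices and termination take care of themselves.
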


\begin{proof}[Proof of Claim~\ref{c:subtreespt}]
For every vertex $v$ let us label the components $C_i(v)$ such that $|F_1(v)|\geq |F_2(v)|\geq |F_3(v)|$. 
We first show if there is no vertex that satisfies the claim, then we have   $\size{F_1(v)} \geq \frac{2}{9}n > |F_2(v)| \geq |F_3(v)|$ for every vertex $v\in V(T)$. 
To see why, assume there is a vertex $v$ such that $\size{F_1(v)} < \frac{2}{9}n$. If $\deg(v) < 3$, then 
\[{\textstyle{\size{F} = \size{F_v} + \size{F_1(v)} + \size{F_2(v)} + \size{F_3(v)} \leq \frac{4}{9}n + \frac{2}{9}n + \frac{2}{9}n + 0 < n = \size{F}}},\]
which is a contradiction. Therefore we have $\deg(v) = 3$, and we get 
\[
\size{F_i(v)} \geq |F_3(v)| = \size{F} - \size{F_v} - |F_1(v)| - |F_2(v)|
> { \textstyle n - \frac{4}{9}n - \frac{2}{9}n - \frac{2}{9}n = \frac{1}{9}n},\]
but then $v$ is a vertex satisfying the claim. Consequently we must have $|F_1(v)|\geq \frac{2}{9}n$. If also $\size{F_2(v)} \geq \frac{2}{9}n$, then $F_1(v)$ and $F_2(v)$ correspond to a bi-clique in $\overline{G}$ of the desired size, and therefore  
$|F_1(v)|\geq \frac{2}{9}n > |F_2(v)| \geq |F_3(v)|$.

\smallskip

Now consider the following orientation of the edges of $T$. For any given $v\in V(T)$, let $u$ be the (unique) neighbor of $v$ contained in $C_1(v)$ that is adjacent $v$, and assign the orientation $\overrightarrow{vu}$. 
By the observations in the previous paragraph, every vertex has a {\em unique} outgoing edge. Furthermore we claim that every edge will be assigned a {\em unique} orientation. This is because if an edge $uv \in E(T)$ is assigned either no orientation or both orientations, then $C_1(v)$ and $C_1(u)$ are disjoint, which implies that $F_1(v)$ and $F_1(u)$ correspond to a bi-clique in $\overline{G}$ of the desired size. 

Thus, if there is no vertex satisfying the claim, then we obtain an orientation of the edges of $T$ in which each vertex has a unique outgoing edge, which is impossible. 
%
%
%
\renewcommand{\qedsymbol}{$\blacksquare$}
\end{proof}

Now fix a vertex $v \in V(T)$ satisfying the condition in Claim~\ref{c:subtreespt}, and let $u_i$ denote the unique neighbor of $v$ in the component $C_i(v)$. For every $w \in V(T) - \{v\}$, we label the components $C_i(w)$ of $T - w$ (some of which may be empty) such that $C_1(w)$ contains the vertex $v$, and  define $C_{23}(w)$ as the induced subgraph of $T$ on $V(T) - V(C_1(w))$.
Define $\Gamma(w)$ to be the collection of subtrees in $F$ which are contained in either $C_2(w)$ or $C_3(w)$, that is, $\Gamma(w) = F_2(w) \cup F_3(w)$, and let $\Gamma^+(w)$ be the collection of subtrees in $F$ contained in $C_{23}(w)$. Note that according to this new notation, we have $\Gamma^+(u_i) = F_i(v)$ for every $i = 1,2,3$.


\begin{claim}\label{c:subtree1/9}
For every $i = 1,2,3$, there is a vertex $w_i \in V(C_i(v))$ that satisfies the following:
\begin{enumerate}[(i)]
    \item $\size{\Gamma^+ (w_i)} \geq \frac{1}{9}n$
    \item $\size{F_2(w_i)}, \size{F_3(w_i)} < \frac{1}{9}n$.
\end{enumerate}
\end{claim}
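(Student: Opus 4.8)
The plan is to find, for each $i$, a suitable vertex $w_i$ inside the component $C_i(v)$ by walking along the tree away from $v$ and stopping at a carefully chosen moment. First I would fix $i$ and consider the vertex $u_i$, the neighbor of $v$ in $C_i(v)$. By Claim~\ref{c:subtreespt} we know $\size{\Gamma^+(u_i)} = \size{F_i(v)} \geq \frac{1}{9}n$, so condition (i) already holds at $u_i$. The idea is to move from $u_i$ deeper into $C_i(v)$ as long as $\size{\Gamma^+(w)}$ stays at least $\frac{1}{9}n$, always stepping into the subcomponent of $T - w$ (not containing $v$) that carries the most subtrees; I would let $w_i$ be the last vertex along this walk with $\size{\Gamma^+(w_i)}\geq\frac{1}{9}n$. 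Since $\Gamma^+$ is nested and shrinks as we move away from $v$ (and eventually reaches a leaf, where $\Gamma^+$ has size at most one by Lemma~\ref{lem:cubic}\eqref{lem:cubic-2}, or at any rate becomes small), such a stopping vertex $w_i$ exists and satisfies~(i) by construction.

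The content of condition (ii) is then extracted from maximality: at $w_i$ we have $\size{\Gamma^+(w_i)}\geq\frac{1}{9}n$, but moving into the heavier of $C_2(w_i)$, $C_3(w_i)$ already drops $\size{\Gamma^+}$ below $\frac{1}{9}n$ — or we reached a point where continuing is impossible. Hence the heavier of $\size{F_2(w_i)}$, $\size{F_3(w_i)}$, call it $\size{F_2(w_i)}$, is smaller than $\frac{1}{9}n$ (since $\Gamma^+$ of the next vertex along that branch contains $F_2(w_i)$ minus at most the subtrees meeting the branch vertex, and one has to be slightly careful here), and then $\size{F_3(w_i)} \leq \size{F_2(w_i)} < \frac{1}{9}n$ automatically. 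I would double-check the bookkeeping relating $\Gamma^+(w_i)$ to $\Gamma^+$ of its child along a branch: $\Gamma^+$ at the child equals $F_j(w_i)$ minus those subtrees contained in $C_j(w_i)$ that still touch the edge from $w_i$, which is why stepping into $C_j(w_i)$ and looking at the child's $\Gamma^+$ gives essentially $\size{F_j(w_i)}$ up to subtrees hitting that edge — and those can be controlled or absorbed.

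An alternative, cleaner formulation I might prefer: order the vertices of $C_i(v)$ by distance from $v$ and simply take $w_i$ to be a vertex maximizing distance from $v$ subject to $\size{\Gamma^+(w_i)} \geq \frac{1}{9}n$ (this set is nonempty as it contains $u_i$). Then for either child $w'$ of $w_i$ farther from $v$, maximality forces $\size{\Gamma^+(w')} < \frac{1}{9}n$; since $C_2(w_i)$ and $C_3(w_i)$ are (up to the subtrees straddling $w_i$) unions of such child-subtrees, we deduce $\size{F_2(w_i)}, \size{F_3(w_i)} < \frac{1}{9}n$. One must handle the degenerate situations where $w_i$ has degree less than $3$ in the relevant direction, but these only make the bounds easier.

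The main obstacle I anticipate is the precise relationship between $\size{F_j(w)}$ and $\size{\Gamma^+}$ at the next vertex: a subtree contained in $C_j(w)$ but containing an endpoint of the edge leaving $w$ lies in $F_j(w)$ but may not lie in $\Gamma^+$ of the child. Getting condition (ii) with the clean threshold $\frac{1}{9}n$ requires arguing that such ``straddling'' subtrees don't spoil the count — either because they all pass through $w$ itself (and $F_w$ is already small, $\size{F_w}<\frac{4}{9}n$, but more to the point their contribution can be bounded) or by choosing the stopping rule so that the relevant inequality is about $\Gamma^+(w')$ directly. I expect the write-up to spend most of its effort pinning down this edge-straddling bookkeeping and the degenerate low-degree cases, while the overall "walk away from $v$ and stop" strategy is straightforward.
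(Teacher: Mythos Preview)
Your approach is exactly the paper's: start at $u_i$, where $\size{\Gamma^+(u_i)}=\size{F_i(v)}\in[\tfrac{1}{9}n,\tfrac{2}{9}n]$, and walk away from $v$ into whichever branch $C_j(w)$ still has $\size{F_j(w)}\ge\tfrac{1}{9}n$, stopping when no such branch exists.

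However, the ``straddling subtree'' obstacle you spend most of the proposal worrying about does not exist. If $w'$ is the neighbor of $w$ in $C_j(w)$, then $C_{23}(w')$ is, by definition, the induced subgraph of $T$ on $V(T)\setminus V(C_1(w'))$; since $C_1(w')$ is the component of $T-w'$ containing $v$ (and hence $w$), we get $C_{23}(w')=C_j(w)$ \emph{exactly}. Therefore $\Gamma^+(w')=F_j(w)$ on the nose --- no subtrees are lost or gained in passing from parent to child. In particular, a subtree contained in $C_j(w)$ that happens to contain $w'$ still lies in $C_{23}(w')$ (which includes $w'$), so it is counted in $\Gamma^+(w')$. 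Once you see this identity, your ``maximal distance'' formulation gives condition~(ii) immediately: for each child $w'$ of $w_i$ farther from $v$, maximality yields $\size{F_j(w_i)}=\size{\Gamma^+(w')}<\tfrac{1}{9}n$, with empty components handled trivially. No bookkeeping is needed.
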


\begin{proof}[Proof of Claim~\ref{c:subtree1/9}]
We argue for the case $i=1$. (The other cases follow by the same argument.)  By the choice of $v$ we have $\frac{1}{9}n \leq |\Gamma^+(u_1)| \leq \frac{2}{9}n$,  and we are done if both $\size{F_2(u_1)}$ and $\size{F_3(u_1)}$ are strictly less than $\frac{1}{9}n$. Otherwise, we may assume that $|F_2(u_1)| > \frac{1}{9}n > |F_3(u_1)|$ and we repeat the argument for the unique neighbor of $u_1$ in the component $C_2(u_1)$. We can repeat this process until it eventually terminates at the desired vertex $w_1$. 
\renewcommand{\qedsymbol}{$\blacksquare$}
\end{proof}

For each $i = 1,2,3$, let $G_i \subset F_i(v)$ be the collection of subtrees that intersect the path $P_T(v, w_i)$. Let $H_i\subset F_i(v)$ be the collection of subtrees that are disjoint from the subgraph $C_{23}(w_i)$ and from the path $P_T(v, w_i)$. Equivalently, $H_i = (F_i(v) - G_i) - \Gamma^+(w_i)$. 
Note that $\size{H_i} \leq \frac{1}{9}n$ since $H_i$ and $\Gamma^+(w_i)$ are disjoint subfamilies of $F_i(v)$ and  $\frac{1}{9}n \leq \size{\Gamma^+(w_i)} \leq  \size{F_i(v)} \leq \frac{2}{9}n$.

\smallskip


Finally, we define some additional subfamilies of $F_v$. Let $X_\emptyset \subset F_v$ be the collection of subtrees that contains neither of $w_1, w_2, w_3$. Let $X_i \subset F_v$ be the collection of subtrees containing only $w_i$ but {\em not} the other $w_j$'s. For every $1 \leq i < j \leq 3$, let $Y_{ij} \subset F_v$ be the collection of subtrees containing both $w_i$ and $w_j$. Note that a member of $F$ that contains $w_1, w_2$ and $w_3$, belongs to $Y_{12}, Y_{13}$, and $Y_{23}$.

\smallskip

The following observations identify certain bi-cliques in $G$ or $\overline{G}$ which allow us to bound the sizes of the various subfamilies we have defined. This will eventually lead us to the existence of a bi-clique of size $2\left\lfloor\frac{2}{9}n\right\rfloor$ in $G$ or $\overline{G}$.

\begin{obs} 
Every member of $X_\emptyset \cup X_1 \cup F_1(v)$ is disjoint from every member of $\Gamma^+(w_2) \cup \Gamma^+(w_3)$, and so the two subfamilies correspond to a bi-clique in $\overline{G}$.
\end{obs}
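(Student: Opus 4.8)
The plan is to locate, for each side of the claimed bi-clique, a single vertex of $T$ that separates it from the other side. Concretely, I would show that every subtree in $X_\emptyset \cup X_1 \cup F_1(v)$ lies entirely on the $v$-side of both $w_2$ and $w_3$, while every subtree in $\Gamma^+(w_2) \cup \Gamma^+(w_3)$ lies entirely on the far side of $w_2$ or of $w_3$; disjointness would then be immediate, since $w_i$ separates these two regions in $T$.

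First I would record the structural observation that $V(C_{23}(w_i)) \subseteq V(C_i(v))$ for $i = 2, 3$, and that $w_i$ is the only vertex of $C_{23}(w_i)$ that is $T$-adjacent to $V(C_1(w_i))$. This follows from the labeling convention $v \in V(C_1(w_i))$ together with $w_i \in V(C_i(v))$ (from Claim~\ref{c:subtree1/9}): the path $P_T(v, w_i)$ stays inside $\{v\} \cup V(C_i(v))$, so the component of $T - w_i$ containing $v$ absorbs $\{v\}$ together with all of $V(C_j(v))$ for the two branches $j \neq i$, leaving $C_{23}(w_i)$ inside the branch $C_i(v)$; in particular $w_i$ separates $v$ from every vertex of $C_{23}(w_i)$. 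Hence every member of $\Gamma^+(w_2)$ (resp.\ $\Gamma^+(w_3)$) is a subtree contained in $V(C_2(v))$ (resp.\ $V(C_3(v))$) reaching $v$ only through $w_2$ (resp.\ $w_3$).

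Next I would split the left-hand family into two types. If $A \in F_1(v)$, then $V(A) \subseteq V(C_1(v))$, which is disjoint from $V(C_2(v)) \cup V(C_3(v))$ and hence from $V(C_{23}(w_2)) \cup V(C_{23}(w_3))$, so $A$ is disjoint from every member of $\Gamma^+(w_2) \cup \Gamma^+(w_3)$. If $A \in X_\emptyset \cup X_1 \subseteq F_v$, then $v \in V(A)$ and $w_2, w_3 \notin V(A)$; if $A$ met some $B \in \Gamma^+(w_2)$ at a vertex $x$, then $x \in V(C_{23}(w_2))$, and since $A$ is connected it would contain $P_T(v, x)$, which passes through $w_2$ by the separation property (the case $x = w_2$ being trivial) — contradicting $w_2 \notin V(A)$; the same argument excludes $A$ meeting any member of $\Gamma^+(w_3)$. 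Combining the two types, every member of $X_\emptyset \cup X_1 \cup F_1(v)$ is disjoint as a vertex set from every member of $\Gamma^+(w_2) \cup \Gamma^+(w_3)$, so these two subfamilies of $F$ correspond to a bi-clique in $\overline{G}$.

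I do not expect a genuine obstacle: the whole thing is a connectivity argument in a tree. The one point needing a little care is the separation claim in the first step — deducing from $v \in V(C_1(w_i))$ and $C_{23}(w_i) = T - V(C_1(w_i))$ that $w_i$ separates $v$ from all of $C_{23}(w_i)$, and not forgetting the boundary case where the meeting vertex is $w_i$ itself. After that, each of the three sub-cases is a one-line application of the fact that a subtree containing two vertices contains the path between them.
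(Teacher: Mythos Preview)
Your argument is correct and is exactly the natural way to verify this observation; the paper states it without proof, treating the separation facts (that $C_{23}(w_i)\subseteq C_i(v)$ and that $w_i$ separates $v$ from $C_{23}(w_i)$) as self-evident from the tree structure and the definitions of $X_\emptyset$, $X_1$, $F_1(v)$, and $\Gamma^+$. There is nothing to compare: you have simply spelled out the connectivity details the authors considered obvious.
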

\noindent The same obviously holds for the symmetric cases as well. 
So by the assumption that $G$ or $\overline{G}$ contains no bi-clique of size $2\left\lfloor\frac{2}{9}n\right\rfloor$, and since 
$\Gamma^+(w_i) \geq \frac{1}{9}n$,  we must have
\[{\textstyle{
\size{X_\emptyset} + \size{X_i} + \size{F_i(v)} < \frac{2}{9}n, }}
\] for every $i = 1,2,3$. We now have the following inequality.
\begin{align*} 
    \size{X_\emptyset} + \size{X_1} + \size{X_2} + \size{X_3} & \leq 3\size{X_\emptyset} + \size{X_1} + \size{X_2} + \size{X_3} \\
    & < {\textstyle {\left(\frac{2}{9}n - \size{F_1(v)} \right) + \left(\frac{2}{9}n - \size{F_2(v)} \right) + \left(\frac{2}{9}n - \size{F_3(v)} \right)}}\\
    & = {\textstyle{\frac{2}{3}n - (\size{F_1(v)} + \size{F_2(v)} + \size{F_3(v)}).}}
\end{align*}

\noindent Note that $F_v = X_\emptyset \cupdot X_1 \cupdot X_2 \cupdot X_3 \cupdot (Y_{12} \cup Y_{13} \cup Y_{23})$, and so we have
\begin{align*}
    \size{Y_{12} \cup Y_{13} \cup Y_{23}} &= \size{F_v} - (\size{X_\emptyset} + \size{X_1} + \size{X_2} + \size{X_3})\\
    &= n - (\size{F_1(v)} + \size{F_2(v)} + \size{F_3(v)}) - (\size{X_\emptyset} + \size{X_1} + \size{X_2} + \size{X_3})\\
    &> {\textstyle{\frac{1}{3}n.}}
\end{align*}

\noindent By double-counting, one of  $\size{Y_{12} \cup Y_{13}}$, $\size{Y_{12} \cup Y_{23}}$, or $\size{Y_{13} \cup Y_{23}}$ is strictly greater than $\frac{2}{9}n$, and without loss generality we may assume that $\size{Y_{12} \cup Y_{13}} > \frac{2}{9}n$. Choose a subset $Y \subseteq Y_{12} \cup Y_{13}$ of size $\lfloor\frac{2}{9}n\rfloor$.

\begin{obs}
Every member of $Y$ intersects every member of $(F_v - Y) \cup G_1$, and so the two subfamilies correspond to a bi-clique in $G$.
\end{obs}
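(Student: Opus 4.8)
The plan is to pin down two places where the relevant subtrees are forced to meet. First I would record what all members of $Y$ have in common: since $Y \subseteq F_v$, every subtree in $Y$ contains $v$, and since $Y \subseteq Y_{12} \cup Y_{13}$ and each of $Y_{12}$, $Y_{13}$ consists of subtrees containing $w_1$, every subtree in $Y$ also contains $w_1$. As a subtree of $T$ is connected, it must then contain the whole path $P_T(v, w_1)$; this is the key point.

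Next I would check that the two families in the statement are genuinely disjoint, so that an ``everything meets everything'' statement really does produce a bi-clique. The families $Y$ and $F_v - Y$ are disjoint by definition, and $G_1 \subseteq F_1(v)$ consists of subtrees contained in the component $C_1(v)$ of $T - v$, none of which contains $v$; hence $G_1$ is disjoint from $F_v$ and in particular from $Y$. So $Y$ and $(F_v - Y) \cup G_1$ are disjoint subfamilies of $F$.

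Then I would verify the pairwise intersections. For $A \in Y$ and $B \in F_v - Y$ we have $v \in A \cap B$, so they intersect. For $A \in Y$ and $B \in G_1$, the definition of $G_1$ gives a vertex $p$ of $P_T(v, w_1)$ lying in $B$; since $A \supseteq P_T(v, w_1)$, we get $p \in A \cap B$, so they intersect. Hence every member of $Y$ meets every member of $(F_v - Y) \cup G_1$, which---the two families being disjoint---is precisely a bi-clique in $G$, as claimed.

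There is essentially no obstacle here: the observation is a direct consequence of the fact, established through the choices of $v$, the $w_i$, and the subfamilies $G_i$, that every subtree in $Y$ contains the path $P_T(v, w_1)$ (and, like the subtrees in $F_v$, the vertex $v$). The only subtlety worth flagging is the asymmetric role of $G_1$: a subtree in $Y$ need not contain $w_2$ and $w_3$ simultaneously (a member of $Y_{12}$ may avoid $w_3$), so $P_T(v, w_1)$ is the only one of the three paths common to all of $Y$, and correspondingly only $G_1$---not $G_2$ or $G_3$---can be adjoined.
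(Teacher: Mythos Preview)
Your argument is correct and is precisely the reasoning the paper leaves implicit: every $A\in Y\subseteq Y_{12}\cup Y_{13}\subseteq F_v$ contains both $v$ and $w_1$, hence the whole path $P_T(v,w_1)$, so it meets every member of $F_v-Y$ at $v$ and every member of $G_1$ along $P_T(v,w_1)$; the disjointness of $Y$ from $(F_v-Y)\cup G_1$ is immediate. The paper states this as an observation without proof, and your write-up matches the intended justification.
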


\noindent We may therefore assume that $\size{F_v - Y} + \size{G_1} < \frac{2}{9}n$, which gives us
\begin{align*}
    {\textstyle\frac{2}{9}n} & > \size{F_v - Y} + \size{G_1} = \size{F_v} - {\textstyle\lfloor\frac{2}{9}n\rfloor} + \size{G_1}\\
    &\geq n - (\size{F_1(v)} + \size{F_2(v)} + \size{F_3(v)}) - {\textstyle\frac{2}{9}n} + \size{G_1}\\
    &= {\textstyle\frac{7}{9}n} - (\size{F_2(v)} + \size{F_3(v)}) - \size{F_1(v)} + \size{G_1}.
\end{align*}

\noindent From this we can conclude that 
\begin{align*}
    \size{H_1} + \size{F_2(w_1)} + \size{F_3(w_1)} &= \size{F_1(v)} - \size{G_1}\\
    & > {\textstyle\frac{5}{9}n} - (\size{F_2(v)} + \size{F_3(v)}),
\end{align*}

\noindent and therefore
\[
\size{H_1} + \size{F_2(w_1)} + \size{F_3(w_1)} + \size{F_2(v)} + \size{F_3(v)} > {\textstyle\frac{5}{9}n.
}\]

\begin{obs}
Any two members taken from distinct families among $H_1$, $F_2(w_1)$, $F_3(w_1)$, $F_2(v)$, $F_3(v)$ are pairwise disjoint. 
\end{obs}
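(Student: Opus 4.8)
The plan is to observe that each of the five families is confined to a prescribed region of the ambient tree $T$ and that these regions meet only in ways that force the corresponding subtrees to be vertex-disjoint; the whole argument is then just bookkeeping on the definitions introduced immediately above. Recall that the conclusion ``pairwise disjoint'' means that the two chosen subtrees of $T$ have disjoint vertex sets.

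First I would record the separation coming from $v$. The families $F_2(v)$ and $F_3(v)$ consist of subtrees contained in the components $C_2(v)$ and $C_3(v)$ of $T-v$, respectively, while $H_1$, $F_2(w_1)$ and $F_3(w_1)$ are all subfamilies of $F_1(v)$ and hence consist of subtrees contained in $C_1(v)$. Since $C_1(v)$, $C_2(v)$, $C_3(v)$ are distinct connected components of $T-v$, their vertex sets are pairwise disjoint, so any two subtrees lying in two different ones of these components are disjoint. This immediately disposes of every pair among the five families that is not entirely contained in $C_1(v)$ --- namely $\{F_2(v),F_3(v)\}$ together with the six pairs that take one family from $\{F_2(v),F_3(v)\}$ and one from $\{H_1,F_2(w_1),F_3(w_1)\}$, seven of the ten pairs.

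It remains to treat $H_1$, $F_2(w_1)$ and $F_3(w_1)$, which all live inside $C_1(v)$; here the separating vertex is $w_1$. Writing $C_1(w_1), C_2(w_1), C_3(w_1)$ for the components of $T-w_1$ with $v\in C_1(w_1)$, I would first check the inclusion $C_{23}(w_1)\subseteq C_1(v)$. Indeed $w_1\in V(C_1(v))$, so for every vertex $x\in V(C_2(v))\cup V(C_3(v))$ the path $P_T(v,x)$ avoids $w_1$; hence $v$ together with all of $C_2(v)$ and $C_3(v)$ lies in $C_1(w_1)$, and therefore $C_{23}(w_1)=T-C_1(w_1)$ is contained in $C_1(v)$. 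In particular $C_2(w_1)$ and $C_3(w_1)$ are contained in $C_{23}(w_1)$. Now the three remaining pairs are immediate: a subtree of $F_2(w_1)$ lies in $C_2(w_1)$ and a subtree of $F_3(w_1)$ lies in $C_3(w_1)$, which are distinct components of $T-w_1$, so the two subtrees are disjoint; and a subtree of $H_1$ is, by the definition of $H_1$, disjoint from the subgraph $C_{23}(w_1)$, hence disjoint from any subtree contained in $C_2(w_1)\subseteq C_{23}(w_1)$ or in $C_3(w_1)\subseteq C_{23}(w_1)$, which settles $\{H_1,F_2(w_1)\}$ and $\{H_1,F_3(w_1)\}$. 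All ten pairs of distinct families have now been accounted for.

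I do not expect a genuine obstacle here: the observation is a formal consequence of the region definitions preceding it. The only step that needs a moment's thought --- and the only one using more than the bare notion of a connected component --- is the inclusion $C_{23}(w_1)\subseteq C_1(v)$, which is precisely where one uses that $w_1$ was chosen inside $C_1(v)$.
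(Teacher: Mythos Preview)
Your argument is correct. The paper states this observation without proof, treating it as immediate from the definitions; your verification---separating first at $v$ to handle the pairs involving $F_2(v)$ or $F_3(v)$, then at $w_1$ for the remaining three pairs, with the key inclusion $C_{23}(w_1)\subseteq C_1(v)$---is exactly the routine check the paper leaves to the reader.
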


\noindent In particular, if we partition $S = \{H_1, F_2(w_1), F_3(w_1), F_2(v), F_3(v)\}$ into two parts $S = S_1 \cupdot S_2$, then this corresponds to a bi-clique in $\overline{G}$. 
Our final goal is to divide $S$ evenly so that both $\bigcup S_1 := \bigcup_{G\in S_1}G$ and $\bigcup S_2:= \bigcup_{G\in S_2}G$ each contain in total at least $\frac{2}{9}n$ subtrees.

Recall $\frac{1}{9}n \leq \size{F_2(v)}, \size{F_3(v)} \leq \frac{2}{9}n$ and all three subfamilies $H_1$, $F_2(w_1)$, and $F_3(w_1)$ have size at most $\frac{1}{9}n$. Now we describe how to split $S$ evenly. Start with $S_1 = \{F_2(v)\}$ and $S_2 = \{F_3(v)\}$. Next, take one of the remaining subfamilies in $S - (S_1 \cup S_2)$ and add it to the part $S_i$ which contains the fewest total number of subtrees. Repeat this until the three subfamilies $H_1$, $F_2(w_1)$, $F_3(w_1)$ have been distributed. Then the resulting $S_1$ and $S_2$ satisfy $\size{\size{\bigcup S_1} - \size{\bigcup S_2}} \leq \frac{1}{9}n$, since $\size{\size{F_2(v)} - \size{F_3(v)}} \leq \frac{1}{9}n$ and as we distribute the remaining subfamilies, the difference $\size{\bigcup S_1} - \size{\bigcup S_2}$ changes by at most $\frac{1}{9}n$ in each step. Because $\size{\bigcup S_1} + \size{\bigcup S_2} > \frac{5}{9}n$, we have that $\bigcup S_1$ and $\bigcup S_2$ both contain at least $\frac{2}{9}n$ subtrees, which completes the proof. \end{proof}


\section{The colorful Erd{\H o}s--Hajnal property} \label{sec:ceh}

In this section we prove Theorem \ref{thm:ceh}. As in the previous section, the cases of interval graphs and cographs are simple and will be treated first. Finally we deal with the case of the graph class $\mathcal{T}_k$, where we give asymptotically matching upper and lower bounds. 

\begin{proof}[Proof of Theorem \ref{thm:ceh} for interval graphs]
Let $G$ be an interval graph on $n$ vertices. By Lemma~\ref{lem:cubic} $G$ has a representation as the intersection graph of a family $F$ of $n$ compact intervals on the real line such that no two intervals share a common endpoint. 

Our goal is to show that for any partition $F = F_1 \cup F_2$ such that $|F_1| = \left\lceil \frac{n}{2} \right\rceil$ and $|F_2| = \left\lfloor \frac{n}{2} \right\rfloor$ there are subfamilies of $H_1 \subset F_1$ and $H_2\subset F_2$, each of size at least $\left\lfloor\frac{n}{6}\right\rfloor$, such that either every member of $H_1$ intersects every member of $H_2$, or every member of $H_1$ is disjoint from every member of $H_2$.

For each $i = 1, 2$, let $a_i$ be the smallest real number such that at least one third of the members of $F_i$ are  contained in the half-line $(-\infty, a_i]$:
\[
a_i := \min \left\{a \in \R: \size{\{I \in F_i : I \subseteq (-\infty, a] \}} \geq {\textstyle\frac{1}{3}} |F_i| \right\} .\]
Similarly, define $b_i$ as the largest real number such that at least one third of elements of $F_i$ are contained in the half-line $[b_i, \infty)$:
\[b_i := \max \left\{b \in \R: \size{\{I \in F_i : I \subseteq [b, \infty) \}} \geq \textstyle{\frac{1}{3}} |F_i| \right\}.\]
For an interval $J \subset \mathbb{R}$, let
$\left. F_i \right|_{J} \subset F_i$  denote the collection of intervals of $F_i$ that are contained in $J$. Note that both
$\left.F_i \right|_{(-\infty, a_i]}$ and $\left.F_i\right|_{[b_i, \infty)}$ have size exactly $\left\lceil\frac{1}{3}|F_i|\right\rceil$, and both $\left.F_i \right|_{(-\infty, a_i)}$ and $\left.F_i\right|_{(b_i, \infty)}$ have size exactly $\left\lceil\frac{1}{3}|F_i|\right\rceil-1$.

\medskip

\noindent We divide cases according to the relative order between $a_1, a_2, b_1$ and $b_2$.

\medskip

\noindent\emph{Case 1.} $a_1 < b_2$ or $a_2 < b_1$ : 
If $a_1 < b_2$, then we set $H_1 = \left.F_1 \right|_{(-\infty, a_1]}$ and $H_2 = \left.F_2\right|_{[b_2, \infty)}$ to obtain the desired subfamilies corresponding to a bi-clique in $\overline{G}$. 
The case $a_2 < b_1$ is symmetric handled in the same way.

\medskip

\noindent For the rest of proof, assume $b_1 \leq a_2$ and $b_2 \leq a_1$. Note that these conditions  imply that we must have $b_1 \leq a_1$ or $b_2 \leq a_2$, and by symmetry we may assume that $b_2 \leq a_2$ holds.

\medskip

\noindent\emph{Case 2.} $a_1 < b_1$ and $b_2 \leq a_2$ :
We have $b_2 \leq a_1 < b_1 \leq a_2$.
In this case we set $H_1 =F_1 - \left( \left. F_1 \right|_{(-\infty, a_1)} \cup \left. F_1 \right|_{(b_1, \infty)} \right)$
and $H_2 =F_2 - \left( \left. F_2 \right|_{(-\infty, a_2)} \cup \left. F_2 \right|_{(b_2, \infty)} \right)$. 
Then $|H_i| \geq |F_i|-2\left\lceil\frac{1}{3}|F_i|\right\rceil+2 \geq \left\lceil\frac{1}{3}|F_i|\right\rceil \ge \left\lfloor\frac{n}{6}\right\rfloor$.
Observe that every interval $I\in H_1$ must {\em intersect} the interval $[a_1,b_1]$, and that every interval in $H_2$ must {\em contain} the interval $[b_2,a_2]$. Since $[a_1,b_1]\subset [b_2,a_2]$ it follows that the families $H_1$ and $H_2$ are the desired subfamilies corresponding to a bi-clique in $G$. 

\medskip

\noindent\emph{Case 3.} $b_1 \leq a_1$ and $b_2 \leq a_2$ : Note that in this case, the intervals $[b_1, a_1]$ and $[b_2, a_2]$ must intersect or else we are in the situation of Case 1. 
Here we set $H_i =F_i - \left( \left. F_i \right|_{(-\infty, a_i)} \cup \left. F_i \right|_{(b_i, \infty)} \right)$ for both $i = 1, 2$. As in Case 2, we have $\size{H_i} \geq \left\lceil\frac{1}{3}|F_i|\right\rceil \ge \left\lfloor\frac{n}{6}\right\rfloor$ and every member in $H_i$ must contain the interval $[b_i,a_i]$, and consequently the families $H_1$ and $H_2$ correspond to a bi-clique in $G$.
\end{proof}

\begin{proof}[Proof of Theorem \ref{thm:ceh} for cographs]
Let $G$ be a cograph on the vertex set $V$ with a partition $V = V_1 \cup V_2$ where $|V_1| = \lceil |V|/2 \rceil$ and $|V_2| = \lfloor |V|/2 \rfloor$. Our goal is to find subsets $U_i\subset V_i$ of size at least $|V_i|/4$ for each $i=1,2$ such that either every vertex in $U_1$ is adjacent to all vertices of $U_2$ or there is no adjacent pair of $u_1 \in U_1, u_2 \in U_2$.
This implies the existence of a bi-clique of desired size in $G$ or $\overline{G}$.

Applying Lemma~\ref{lem:cograph} with $U = V_1$, we get a subset $W \subseteq V$ such that $\frac{1}{4}\size{V_1} \leq \size{V_1 \cap W} \leq \frac{1}{2}\size{V_1}$ and $W$ conforms to every $v \in V - W$.
Now define subsets $X(W), Y(W) \subseteq V - W$ by setting
\[
\renewcommand*{\arraystretch}{1.4}
\begin{array}{rcl}
X(W) & := & \{v \in V - W : W \subset N(v)\},   \\
Y(W) & := & \{v \in V - W : W\cap N(v) = \emptyset\}.
\end{array}\]
By the choice of $W$ using Lemma~\ref{lem:cograph}, we have $X(W) \cupdot Y(W) = V - W$. We now distinguish two cases.

\medskip

\noindent \emph{Case 1.} $\size{V_2 \cap W} < \frac{1}{2}\size{V_2}$:
Note that $\size{V_2 \cap (V - W)} \geq \frac{1}{2}\size{V_2}$. Then one of the sets $V_2 \cap X(W)$ or $V_2 \cap Y(W)$ has cardinality at least $\frac{1}{4}\size{V_2}$. We define $U_2$ to be the set of larger cardinality, and define $U_1 = V_1 \cap W$.

\medskip

\noindent \emph{Case 2.} $\size{V_2 \cap W} \geq \frac{1}{2}\size{V_2}$:
Note that $\size{V_1 \cap (V - W)} \geq \frac{1}{2}\size{V_1}$. Then one of the sets $V_1 \cap X(W)$ or $V_1 \cap Y(W)$ has cardinality at least $\frac{1}{4}\size{V_1}$. We define $U_1$ to be the set of larger cardinality, and defined by $U_2 = V_2 \cap W$.

\medskip

In both cases, we have
$\size{U_i} \geq \size{V_i}/4$ for $i = 1, 2$, and this completes the proof.
\end{proof}

\begin{proof}[Proof of Theorem \ref{thm:ceh} for chordal graphs]
The case $k = 2$ is covered by results on intervals. We therefore assume that $G$ is a chordal graph on $n$ vertices, with leafage $\ell(G) = k\geq 3$, and suppose we are given a partition of the vertices into subsets $V_1$ and $V_2$ whose sizes differ by at most one.

By Lemma \ref{lem:cubic} we may fix a subtree representation of $G$ as an intersection graph of a family $F$ of $n$ subtrees of an ambient tree $T$ where $T$ has $k$ leaves, $\Delta(T) = 3$,  and where no two subtrees share a common leaf. The vertex partition $V_1\cup V_2$ corresponds to a partition $F = F_1 \cup F_2$.

Let $L(T) = \{v_1, v_2, \ldots, v_k\}$ be the $k$ leaves of $T$. For each $i \in [k]$, define $L_i = L(T) - \{v_i\}$ and let $T_i = \tree_T(L_i)$ which is $T$ with one leaf removed. Let $r_i$ be the closest degree 3 vertex to $v_i$ so that $E(T) - E(T_i) = E(P_T(v_i, r_i))$. Let $s_i$ be the unique neighbor of $r_i$ in $P_T(v_i, r_i)$.

First we show the CEH-property of $\mathcal{T}_k$ with a smaller constant:

\begin{claim}\label{claim:weakerceh}
$\mathcal{T}_k$ satisfies the CEH-property with constant $\varepsilon_c = \frac{1}{3(k-1)}$.
\end{claim}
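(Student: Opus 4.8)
The plan is to argue in the same spirit as the interval case, but using the subtree structure. First I would pick, for each $i\in[k]$, the leaf-path $P_T(v_i,r_i)$ together with its ``attachment'' vertex $s_i$. For a family of subtrees, being contained near the leaf $v_i$ (i.e.\ entirely inside the path segment between $v_i$ and some cut point) plays the role of ``contained in a half-line'' in the interval proof. Concretely, for each part $F_j$ ($j=1,2$) and each leaf $v_i$, let $a_i^{(j)}$ be the cut point along $P_T(v_i,r_i)$ chosen so that exactly $\lceil \frac{1}{k-1}|F_j|\rceil$ members of $F_j$ lie strictly on the $v_i$-side of it; this is possible since no two subtrees share the leaf $v_i$, so the ``number of subtrees strictly beyond a point'' changes in unit steps as we slide the point. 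Summing over the $k-1$ directions corresponding to $L_i$ — wait, more carefully, summing the $\lceil\frac1{k-1}|F_j|\rceil$-sized leaf-clusters over all $k$ leaves would overcount, so instead I would use a pigeonhole/parity argument analogous to the interval one to locate a ``central'' region.

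The cleaner route for Claim~\ref{claim:weakerceh} is probably this. Fix part $F_1$. For each leaf $v_i$, there is a vertex $c_i$ on $P_T(v_i,r_i)$ (or at $r_i$) such that fewer than $\frac{1}{k-1}|F_1|$ members of $F_1$ lie strictly beyond $c_i$ toward $v_i$, but at least $\frac{1}{k-1}|F_1|$ members lie weakly beyond it; call $B_i(F_1)$ the set of subtrees of $F_1$ lying strictly beyond $c_i$. The key dichotomy: either some leaf $v_i$ has the property that ``many'' subtrees of $F_1$ avoid the $c_i$-branch entirely while ``many'' subtrees of the other part $F_2$ are confined to that branch — giving a bi-clique in $\overline{G}$ — or, for every leaf, the subtrees of each part that survive after deleting all $k$ leaf-branches form a large family, and any two such surviving subtrees both contain the ``core'' $\tree_T(\{c_1,\dots,c_k\})$ (a nonempty subtree since $T$ is a tree), hence pairwise intersect, giving a bi-clique in $G$. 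Counting: deleting the $k$ branches removes at most $k\cdot\frac{1}{k-1}|F_j| \approx |F_j|$ subtrees, which is too lossy as stated, so I would instead only need to delete the branches at $k-1$ of the leaves for one part and balance the bookkeeping — which is exactly why the constant $\frac{1}{3(k-1)}$ (not $\frac{1}{k-1}$) appears, leaving slack for a factor-$3$ loss from a final three-way pigeonhole between $F_1$'s core-survivors, $F_2$'s core-survivors, and the ``escaping'' families.

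So the steps, in order: (1) set up the leaf-paths $P_T(v_i,r_i)$, neighbors $s_i$, and the cut points $c_i^{(j)}$ for $j=1,2$, using Lemma~\ref{lem:cubic}(2) to guarantee unit-step monotonicity of the counting function along each path; (2) define, for each $j$, the ``escaping'' families $B_i(F_j)$ near each leaf and the ``core'' family $C(F_j)$ of subtrees meeting $\tree_T(c_1^{(j)},\dots,c_k^{(j)})$; (3) if for some leaf $v_i$ the family of $F_1$-subtrees avoiding the $c_i$-branch and the family $B_i(F_2)$ are both of size $\ge \frac{1}{3(k-1)}n$, output them as a bi-clique in $\overline{G}$ (and symmetrically with $F_1,F_2$ swapped); (4) otherwise bound $|C(F_j)| \ge |F_j| - (\text{escapers})$ and show, via a three-way pigeonhole distributing the deficit, that both $|C(F_1)|$ and $|C(F_2)|$ exceed $\frac{1}{3(k-1)}n$, and observe that members of $C(F_1)$ and $C(F_2)$ pairwise intersect because each contains a fixed common vertex of the core, yielding the bi-clique in $G$. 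The main obstacle I anticipate is step (4): making the counting tight enough that the constant $\frac{1}{3(k-1)}$ actually comes out, i.e.\ carefully arranging which $k-1$ (rather than $k$) leaf-branches are deleted and on which side, so that the total number of escapers stays below roughly $\frac{2}{3}n$ and the remaining $\frac13 n$ splits favorably between the two parts; getting the ``strict vs.\ weak'' inequalities and the boundary vertices $s_i$ versus $r_i$ exactly right so that ``avoids the branch'' and ``confined to the branch'' are genuinely complementary is the delicate bookkeeping.
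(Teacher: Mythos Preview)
Your approach differs from the paper's, which proves the claim by a short induction on $k$: if some $T_i := \tree_T(L(T)\setminus\{v_i\})$ meets at least $\lceil\tfrac{k-2}{k-1}|F_j|\rceil$ members of each $F_j$, restrict both families to $T_i$ and apply the hypothesis for $\mathcal{T}_{k-1}$; otherwise some leaf-path $P_T(v_i,s_i)$ contains at least $\lceil\tfrac{1}{k-1}|F_1|\rceil$ members of (say) $F_1$, and then either $T_1$ contains $\lceil\tfrac{1}{k-1}|F_2|\rceil$ members of $F_2$ (giving a bi-clique in $\overline{G}$), or that many members of $F_2$ meet $P_T(v_1,s_1)$, whereupon the interval case of Theorem~\ref{thm:ceh} applied on that path finishes. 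No ``core'' region is ever constructed.

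Your direct ``core'' route has a genuine gap in step~(4). You assert that a subtree which escapes at no leaf --- i.e.\ lies strictly beyond no $c_i$ --- must \emph{contain} the core $\tree_T(c_1,\dots,c_k)$, so that any two survivors intersect. This is false once $k\ge 3$: a survivor can be, for instance, a single vertex on the core segment between some $c_i$ and $r_i$, and two such survivors attached to different leaves $v_i,v_j$ are disjoint. The implication ``not past either endpoint $\Rightarrow$ contains the whole middle'' is special to paths; as soon as $T$ has a branching vertex it fails. (Your own wording oscillates between survivors \emph{meeting} the core in step~(2) and \emph{containing} it in the preceding paragraph --- a symptom of exactly this issue.) The counting worry you flag is secondary: even with perfect bookkeeping the pairwise-intersection conclusion is simply unavailable, so the argument cannot close along these lines. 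The paper's induction sidesteps the problem by peeling off one leaf at a time until only a path remains and the interval argument legitimately applies.
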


\begin{proof}[Proof of Claim~\ref{claim:weakerceh}]
Again, the case $k = 2$ is covered by results on intervals. We proceed to induction on $k$. Assume that $k \geq 3$ and let $T, F_1, F_2$ and $T_i$'s be as above.

\smallskip

If there is some $i$ such that $T_i$ intersects both at least $\ceil{\frac{k - 2}{k - 1}\size{F_1}}$ members of $F_1$ and $\ceil{\frac{k - 2}{k - 1}\size{F_2}}$ members of $F_2$, then the result follows by applying the inductive assumption on families
$$
F_j(T_i) := \{X \cap T_i : X \in F_j\}
$$
with $j = 1, 2$.

\smallskip

Therefore, for each $i \in [k]$ the path $P_T(v_i, s_i)$ contains either at least $\ceil{\frac{1}{k - 1}\size{F_1}}$ members of $F_1$ or $\ceil{\frac{1}{k - 1}\size{F_2}}$ members of $F_2$. Assume that $P_T(v_1, s_1)$ contains at least $\ceil{\frac{1}{k - 1}\size{F_1}}$ members of $F_1$. If $T_1$ contains $\ceil{\frac{1}{k - 1}\size{F_2}}$ members of $F_2$, then the two subfamilies correspond a bi-clique of desired size in $G$.

Otherwise, at least $\ceil{\frac{k - 2}{k - 1}\size{F_2}}$ members of $F_2$ make nonempty intersections with $P_T(v_1, s_1)$. Consider the intersections as subpaths of $P_T(v_1, s_1)$. Together with $\ceil{\frac{1}{k - 1}\size{F_1}}$ members of $F_1$ contained in the same path, by Theorem~\ref{thm:ceh} (1), there exists a bi-clique of desired size in $G$ or $\overline{G}$.
\renewcommand{\qedsymbol}{$\blacksquare$}
\end{proof}

Now we prove the CEH-property for $\mathcal{T}_k$ with the promised constant $\varepsilon_c = \frac{\ln k}{20k}$. Let $T$ and $r_1, r_2, \ldots, r_k$ as above. We define $\tree_T(r_1, r_2, \ldots, r_k)$ as the \emph{trunk} of $T$, denoted by $\trunk(T)$.

A key observation is that the trunk of a tree is again a tree with fewer leaves:

\begin{obs}\label{o:LeavesofTrunk}
$\trunk(T)$ is a tree with at most $\floor{\frac{k}{2}}$ leaves.
\end{obs}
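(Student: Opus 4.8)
The plan is to establish Observation~\ref{o:LeavesofTrunk} by analyzing how the degree-$3$ vertices $r_i$ sit inside $T$, using the defining property $E(T) - E(T_i) = E(P_T(v_i, r_i))$. First I would record two structural facts. Fact one: each $r_i$ has degree exactly $3$ in $T$ and, crucially, degree exactly $2$ in $T_i = \tree_T(L_i)$; equivalently, among the three edges of $T$ at $r_i$, exactly one lies on the pendant path $P_T(v_i, r_i)$ leading to the removed leaf $v_i$, and the other two lie in $T_i$. Fact two: for $i \ne j$ we have $v_i \ne v_j$, and since $r_i$ is the closest degree-$3$ vertex to $v_i$, the open path $P_T(v_i, r_i) \setminus \{r_i\}$ consists entirely of degree-$\le 2$ vertices and contains no other $r_j$; in particular all the $r_i$ are distinct and each $r_i$ lies in $T_j$ for every $j \ne i$.

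Next I would bound the number of leaves of $S := \trunk(T) = \tree_T(r_1, \dots, r_k)$. A vertex $x$ is a leaf of $S$ only if $x = r_i$ for some $i$ (interior Steiner points of a Steiner tree in a tree can always be taken among the terminals here, but more directly: $S = \bigcup_{i,j} P_T(r_i,r_j)$, so any leaf of $S$ is an endpoint of every such path through it, forcing it to be one of the $r_i$). So it suffices to count how many $r_i$ are leaves of $S$. The key point is that $r_i$ is a leaf of $S$ precisely when at most one of its three incident edges in $T$ is used by some path $P_T(r_i, r_j)$. Now at $r_i$, one incident edge goes toward $v_i$ down the pendant path, which contains no $r_j$, so that edge is never used by $S$; the other two incident edges each lead into $T_i$, which contains all $r_j$ with $j \ne i$. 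Thus $r_i$ is a leaf of $S$ iff all the other $r_j$ ($j\ne i$) lie on the same side of $r_i$, i.e.\ in the same one of the two components of $T_i - r_i$ — call these the two ``branches'' of $r_i$ inside $T_i$.

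This reduces the problem to a counting lemma about the tree $T$: orient things by saying $r_i$ ``points'' to the branch of $T_i - r_i$ containing all of $\{r_j : j \ne i\}$, when such a branch exists. I would argue that two distinct leaves $r_i, r_{i'}$ of $S$ cannot ``point away'' from each other in a way that lets too many of them coexist — concretely, if $r_{i_1}, \dots, r_{i_t}$ are all leaves of $S$, consider the subtree they span together with the remaining $r_j$'s. Each leaf $r_{i_a}$ of $S$ has its unique $S$-edge heading toward the rest of the $r$'s, and the pendant path $P_T(v_{i_a}, r_{i_a})$ hangs off in a direction disjoint from $S$. The cleanest way to finish: build an injection from the set of leaves of $S$ into a set of size $\floor{k/2}$. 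A natural candidate is to pair each leaf $r_i$ of $S$ with the leaf $v_i$ of $T$ it governs, and observe that $v_i \ne v_j$ and moreover that if $r_i$ is a leaf of $S$ then the branch of $T_i - r_i$ \emph{not} containing the other $r_j$'s must itself contain at least one leaf of $T$ distinct from $v_i$ (since $r_i$ has degree $3$ in $T$, that branch is a nonempty subtree hanging off $r_i$ and hence carries a leaf of $T$), and these ``extra'' leaves are distinct for distinct $i$ and distinct from all the $v_j$. That gives $2 \cdot (\#\text{leaves of } S) \le k$, i.e.\ $\#\text{leaves of }S \le \floor{k/2}$.

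The main obstacle I anticipate is the last step: cleanly producing, for each leaf $r_i$ of $S$, a ``private'' leaf of $T$ in the discarded branch that is provably distinct both from the other such private leaves and from the $v_j$'s. The subtlety is that a single branch off $r_i$ could a priori be shared with the branch structure at some other $r_{i'}$, so I need to use the fact that $r_i$ being a leaf of $S$ forces \emph{all} other $r_j$ to the opposite side, which in turn keeps the private branches of distinct leaves of $S$ pairwise disjoint (they are separated by the path in $S$ joining the corresponding $r_i$'s). Once that disjointness is nailed down, the counting is immediate; I expect this to be a short but slightly delicate case analysis on the location of the $r_j$'s relative to a fixed leaf $r_i$ of the trunk.
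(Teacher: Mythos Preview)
Your argument has a genuine error in ``Fact two'': the $r_i$ need \emph{not} be distinct. Take for instance $T$ a subdivided star $K_{1,3}$ (one center $c$, three pendant paths to leaves $v_1,v_2,v_3$); then $r_1=r_2=r_3=c$. More generally, whenever a degree-$3$ vertex $r$ has two of its three branches consisting entirely of degree-$\le 2$ vertices, $r$ is the nearest degree-$3$ vertex to both leaves at the ends of those branches. Your deduction ``the open path $P_T(v_i,r_i)\setminus\{r_i\}$ contains no other $r_j$, hence all $r_i$ are distinct'' is a non sequitur: it rules out $r_j$ in the open path but not $r_j=r_i$. Once distinctness fails, your characterization ``$r_i$ is a leaf of $S$ iff all other $r_j$ lie in one component of $T_i-r_i$'' breaks down (an $r_j$ equal to $r_i$ lies in neither component), and your final counting also stumbles: the ``extra'' leaf you produce in the discarded branch is itself a leaf of $T$, i.e.\ some $v_m$, so it cannot be ``distinct from all the $v_j$''.

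The fix is very close to what you are reaching for, and it is exactly the paper's argument. Your private branch $C$ at a trunk leaf $\ell$ contains some leaf $v_m$ of $T$; since every $r_j$ with $r_j\ne\ell$ lies in the trunk direction at $\ell$, the branch $C$ contains no such $r_j$, and therefore the nearest degree-$3$ vertex to $v_m$ along $P_T(v_m,\ell)$ is $\ell$ itself, i.e.\ $r_m=\ell$. Thus every leaf $\ell$ of $\trunk(T)$ satisfies $\ell=r_i$ for at least two indices $i\in[k]$, and since these index sets are disjoint for distinct leaves, the number of leaves is at most $\lfloor k/2\rfloor$. In other words, rather than trying to inject leaves of $S$ into ``private'' leaves of $T$ disjoint from the $v_j$'s, you should observe that your private leaf \emph{is} a second $v_j$ with $r_j=\ell$ and count indices instead.
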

\begin{proof}[Proof of Observation~\ref{o:LeavesofTrunk}]
Let $\ell$ be a leaf of $\trunk(T)$. We claim that $\ell = r_i$ for at least two indices $i \in [k]$ which is sufficient to prove the observation.

First we show that $\ell = r_i$ for some $i$. Note that $\trunk(T) = \tree_T(r_1, r_2, \ldots, r_k) = \bigcup_{i, j \in [k]} P_T(r_i, r_j)$. Hence $\ell \in P_T(r_i, r_j)$ for some $i, j \in [k]$. However the degree of $\deg(\ell) = 1$ so it cannot be an interior vertex of the path. Thus $\ell$ is either $r_i$ or $r_j$.

Next, we show that $\ell = r_i$ for at least two values of $i$. If $\trunk(T)$ has no edge then $T$ is a subdivision of star $K_{1, k}$ and we are done. Now consider the case where $\trunk(T)$ has an edge. Let $w$ be the unique neighbor of $\ell$ in $\trunk(T)$. Since $\ell = r_i$, it has another neighbor $s_i$ in $T$, which is the unique neighbor of $\ell$ in $P_T(v_i, \ell) = P_T(v_i, r_i)$.
Since $\deg(r_i) = 3$ in $T$, $\ell = r_i$ has the third neighbor $u$ in $T$ other than $w$ and $s_i$.

Consider a leaf $v_j$ of $T$ such that $P_T(v_j, \ell)$ contains $u$. Note that it must be $j \neq i$, and we finish the proof by showing $\ell = r_j$. Assume not. Then the path $P_{\trunk(T)}(\ell, r_j)$ contains $w$ since $w$ is a unique neighbor of $\ell$ in $\trunk(T)$. However it implies $w \in P_T(\ell, r_j)$. On the other hand, $P_T(v_j, \ell)$ and $P_T(\ell, r_j)$ are edge disjoint hence $P_T(v_j, \ell) \cup P_T(\ell, r_j)$ is a path from $v_j$ to $r_j$ in $T$. Since $T$ is tree, it is also the unique path between $v_j$ and $r_j$. But this contradicts to that $r_j$ is the closest vertex of degree 3 to $v_j$.
\renewcommand{\qedsymbol}{$\blacksquare$}
\end{proof}

Let $R = \trunk(T) \subset T$. For each $i = 1, 2$, define $\left( F_i \right)_R$ to be the collection of subtrees in $F_i$ that intersect $R$:
\[
\left( F_i \right)_R := \{ X \in F_i : X \cap R \neq \emptyset \}.
\]

The complement of $\left( F_i \right)_R$, which is the collection of subtrees in $F_i$ that are disjoint from $R$ is denoted by $\overline{\left( F_i \right)_R}$:
\[
\overline{\left( F_i \right)_R} := \{ X \in F_i : X \cap R = \emptyset \}.
\]

Thus, each member of $\overline{\left( F_i \right)_R}$ can be viewed as a subpath of $P_T(v_j, s_j)$ for some $j \in [k]$, where $s_j$ is the unique neighbor of $r_j$ in $P_T(v_j, r_j)$.

We will prove the theorem using induction on $k$. We divide into cases according to the size of $\left( F_1 \right)_R$ and $\left( F_2 \right)_R$.

First, assume that both $\left(F_1 \right)_R$ and $\left( F_2 \right)_R$ are big, say $\size{\left( F_i \right)_R} \geq \frac{2}{3}|F_i|$ for each $i = 1, 2$. Define subtree families $F_i(R) := \{X \cap R : X \in \left( F_i \right)_R \}$ of $R$ as a multiset. By the induction hypothesis there exists subfamilies $\left( F_1 \right)_R' \subseteq F_1(R)$ and $\left( F_2 \right)_R' \subseteq F_2(R)$ corresponding to a bi-clique in $G$ or $\overline{G}$, with size:
\[
\size{\left( F_i \right)_R'} \geq \textstyle \frac{1}{20} \frac{\ln (k/2)}{k/2} \size{F_i(R)} \geq \frac{1}{20} \frac{\ln (k/2)}{k/2} \frac{2}{3} |F_i| \geq \frac{1}{20} \frac{\ln k}{k} |F_i|,
\]
where the first inequality comes once we think of $R$ as a tree of at most $\floor{k/2}$ leaves.

\smallskip

Next, consider the case where both $\left( F_1 \right)_R$ and $\left( F_2 \right)_R$ are small, meaning $\size{\left( F_i \right)_R} < \frac{2}{3}|F_i|$ for each $i$. Then we have $| \overline{\left( F_i \right)_R} | \geq \frac{1}{3}|F_i|$. Recall that each element of $\overline{\left( F_i \right)_R}$ is a subpath of some path $P_T(v_j, s_j)$. Therefore we may view each family $\overline{(F_i)_R}$ as a family of intervals contained in the open interval $(i-1, i) \subseteq \R$, and Theorem~\ref{thm:ceh} for interval graphs guarantees the existence of subfamilies $F'_i \subseteq \overline{\left( F_i \right)_R}$ of size $| F'_i | \geq \frac{1}{3} |\overline{\left( F_i \right)_R}| \geq \frac{1}{9} |F_i|$.

\smallskip

Finally, consider the last case where only one of $\left( F_1 \right)_R$ or $\left( F_2 \right)_R$ is big. Without loss of generality assume $\left( F_1 \right)_R$ is big so that $\size{\left( F_1 \right)_R} \geq \frac{2}{3}|F_1|$ and $| \overline{\left( F_2 \right)_R}| \geq \frac{1}{3}|F_2|$. 
For each $j \in [k]$, define the family $H_j \subset \overline{\left( F_2 \right)_R}$ of subtrees  contained in $P_T(v_j, s_j)$. Note that $\{H_1, H_2, \ldots, H_k\}$ form a partition of $\overline{\left( F_2 \right)_R}$. Assume the size of parts are in decreasing order so that $\sum_{i \leq m} \size{H_i} \geq \frac{m}{k} | \overline{\left( F_2 \right)_R} |$.

We will choose two sequences of subfamilies 
$\left( F_1 \right)_R = F^{(0)} \supseteq F^{(1)} \supseteq \ldots \supseteq F^{(k)}$ and $H'_1 \subseteq H_1, H'_2 \subseteq H_2, \ldots, H'_k \subseteq H_k$ satisfying the following three conditions for every $j \in [k]$:

\begin{enumerate}[(i)]
    \item $\size{F^{(j)}} \geq \frac{1}{2} \size{F^{(j-1)}} \geq \frac{1}{2^j} \size{\left( F_1 \right)_R}$.
    \item $\size{H'_j} \geq \frac{1}{2}\size{H_j}$.
    \item $F^{(j)}$ and $H'_j$ correspond to a bi-clique in $G$ or $\overline{G}$.
\end{enumerate}

Let us first show how the existence of such subfamilies yields the conclusion of theorem. Note that for every $j \leq m \leq k$, $F^{(m)} \subseteq F^{(j)}$ and $H'_j$ also correspond to bi-cliques. For a fixed $m \in [k]$ we produce a partition of $[m]$ as follows: Let 
$I\subset [m]$ denote the set of  indices $i\in [m]$ such that every member of $F^{(m)}$ intersects every member of $H'_i$. Similarly, let $J\subset [m]$ denote the set of indices $j\in [m]$ such that every member of $F^{(m)}$ is disjoint from every member of $H'_j$.
Note that $H'_I = \bigcup_{i \in I} H'_i$ and $H'_J = \bigcup_{j \in J} H'_j$ are disjoint, hence one of them has size at least one half of $|{\bigcup_{i \in [m]} H'_i} |$. Assume that $| {H'_I} | \geq \frac{1}{2} |{\bigcup_{i \in [m]} H'_i}|$, where the opposite case $|{H'_J}| \geq \frac{1}{2}\size{\bigcup_{i \in [m]} H'_i}$ is handled in the same way.

Now $F^{(m)}$ and $H'_I$ correspond to a bi-clique in $G$, and their sizes are bounded below by
\[
|{F^{(m)}}| \geq \textstyle\frac{1}{2^m} \size{\left( F_1 \right)_R} \geq \frac{1}{3 \cdot 2^{m-1}}|F_1|
\]
and
\[
|{H'_I}| \geq \textstyle\frac{1}{2}\size{\bigcup_{i \in [m]} H'_i} \geq \frac{1}{4}\size{\bigcup_{i \in [m]} H_i} \geq \frac{m}{4k} |{\overline{\left( F_2 \right)_R}}| \geq \frac{m}{12k} |F_2|.
\]

We now take $m = \floor{\log_2 k - \log_2 \ln k} \geq \frac{1}{2}\log_2 k$, which  yields $| {F^{(m)}} | \geq \frac{2}{3} \frac{\ln k}{k} |F_1|$ and $| {H'_I} | \geq \frac{1}{20} \frac{\ln k}{k} |F_2|$, which produces the desired bi-clique in $G$. 

\medskip

We now show how to construct the promised subfamilies $\{F^{(j)}\}_{j \in [k]}$ and $\{H'_j\}_{j \in [k]}$. As stated above, let $F^{(0)} = \left( F_1 \right)_R$. Fix $j \in [k]$, and assume that $F^{(i)}$ and $H'_i$'s are inductively constructed for every $i < j$.
Consider the path $P_T(v_j, s_j)$, and take the vertex $a_j$ on it which is closest to $v_j$ and satisfies that the subpath $P_T(v_j, a_j)$ contains at least half of the members of $F_j$. Note that there is a unique member $X_j \in F_j$ which is contained in $P_T(v_j, a_j)$ and has $a_j$ as an endpoint.

We distinguish two cases: either at least half of members of $F^{(j-1)}$ contain $a_j$, or at least half of them are disjoint with $a_j$. 

In the former case, let $F^{(j)}$ be those members of $F^{(j-1)}$ containing $a_j$ and let $H'_j$ consist of $X_j$ and the collection of members of $H_j$ which are not fully contained in $P_T(v_j, a_j)$. Note that the two new subfamilies $F^{(j)}$ and $H'_j$ satisfy conditions (i)-(iii) above, and that  $F^{(j)}$ and  $H'_j$ correspond to a bi-clique in $G$.

In the latter case, let $F^{(j)}$ be those members of $F^{(j-1)}$ that do not contain $a_j$ and let $H'_j$ be the collection of members of $H_j$ which are fully contained in $P_T(v_j, a_j)$. Again we note that these new subfamilies satisfy all three conditions, and that $F^{(j)}$ and $H'_j$ correspond to a bi-clique in $\overline{G}$. This finishes the inductive step of construction and concludes the proof.
\end{proof}

The asymptotically matching lower bound for the case of chordal graphs in Theorem \ref{thm:ceh} is a consequence of the following.  

\begin{theorem} \label{t:HomogeneousTreeLeaf}
Let $k \geq 17$ be an integer and let $T$ be a tree with $k$ leaves. There exist two subtree families $F_1$, $F_2$ of $T$ with the following property: If $H_1\subset F_1$ and $H_2\subset F_2$ are such that either every member of $H_1$ intersects every member of $H_2$, or every member of $H_1$ is disjoint from every member of $H_2$,
then $\size{H_i} \leq \frac{2 \ln k}{k\ln 2}  \size{F_i}$ for some $i = 1, 2$.
\end{theorem}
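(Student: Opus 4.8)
The plan is to produce $F_1$ and $F_2$ by a probabilistic construction that realizes a uniformly random bipartite intersection pattern, and the first point to check is that this can be done inside \emph{any} tree with $k$ leaves. The crucial shape-independent fact is elementary: if $v$ is a leaf of $T$ and $U\subseteq V(T)$, then $v\in\tree_T(U)$ if and only if $v\in U$, since a degree-one vertex can never be an interior vertex of a path. So I would fix a non-leaf vertex $c\in V(T)$ (one exists because $k\geq 3$), list the leaves as $v_1,\dots,v_k$, and for $S\subseteq[k]$ set $A_S=\tree_T\bigl(\{c\}\cup\{v_i:i\in S\}\bigr)$; this is a subtree of $T$ containing $c$, and by the fact above $A_S$ meets the one-vertex subtree $\{v_j\}$ precisely when $j\in S$. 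Hence, with $F_2=\{\{v_1\},\dots,\{v_k\}\}$ held fixed and $F_1=\{A_{S_1},\dots,A_{S_k}\}$ for arbitrary $S_1,\dots,S_k\subseteq[k]$, the bipartite ``intersect versus avoid'' pattern between $F_1$ and $F_2$ is exactly the $k\times k$ matrix over $\{0,1\}$ whose $t$-th row is the indicator vector of $S_t$ — and this holds no matter what $T$ looks like. (The members of $F_1$ pairwise intersect through $c$ and those of $F_2$ are pairwise disjoint, but only the bipartite pattern matters.)

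Next I would choose $S_1,\dots,S_k$ uniformly and independently at random from all subsets of $[k]$, so the pattern between $F_1$ and $F_2$ becomes a uniformly random $k\times k$ $0/1$ matrix $M$ (one may condition on the $S_t$ being distinct, which occurs with high probability, or simply allow $F_1$ to be a multiset — neither affects the argument). A pair of subfamilies $H_1\subseteq F_1$, $H_2\subseteq F_2$ as in the statement corresponds precisely to a monochromatic combinatorial rectangle of $M$ with side lengths $\size{H_1}$ and $\size{H_2}$. Put $s=\floor{2\log_2 k}+1$, the least integer exceeding $\tfrac{2\ln k}{\ln 2}=2\log_2 k$. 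Any monochromatic rectangle with both side lengths $>2\log_2 k$ (equivalently $\geq s$) contains a monochromatic $s\times s$ submatrix, so it suffices to show that with positive probability $M$ has no monochromatic $s\times s$ submatrix: for such an $M$ every monochromatic rectangle has a side of length at most $2\log_2 k$, and since $\size{F_1}=\size{F_2}=k$ this gives $\min(\size{H_1},\size{H_2})\leq 2\log_2 k=\tfrac{2\ln k}{k\ln 2}\size{F_i}$; choosing $i$ to be the index realizing this minimum is then exactly the assertion of the theorem.

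The remaining step is a first-moment estimate. The expected number of monochromatic $s\times s$ submatrices of $M$ is $2\binom{k}{s}^{2}2^{-s^{2}}\leq 2\,(s!)^{-2}\,2^{\,2s\log_2 k-s^{2}}=2\,(s!)^{-2}\,2^{-\delta s}$, where $\delta:=s-2\log_2 k\in(0,1]$. For $k\geq 17$ one has $s\geq 9$, whence $2\,(s!)^{-2}<1$, so the expectation is strictly below $1$ and there is a choice of $S_1,\dots,S_k$ for which $M$ has no monochromatic $s\times s$ submatrix. This is the only place the hypothesis $k\geq 17$ enters — it is simply the threshold at which the factorial factor absorbs the constant $2/\ln 2$. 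Finally, for the intended consequence (the matching lower bound for the CEH-property of $\mathcal{T}_k$), replacing each subtree by $m$ identical copies multiplies $\size{F_1}$, $\size{F_2}$ and every $\size{H_i}$ by at most $m$ while preserving monochromaticity, which yields examples of every size with the same ratio.

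I expect the only genuinely delicate point to be the shape-independence: one must notice that $T$ need \emph{not} be ``spread out'' (say a deep binary tree) to force the bound — an arbitrary bipartite pattern can be encoded against the $k$ leaves of \emph{any} $k$-leaf tree, and the leaf-versus-path observation is exactly what makes this work. Everything after that is a routine union bound.
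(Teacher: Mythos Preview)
Your proof is correct and follows the same overall strategy as the paper --- take $F_2$ to be the $k$ leaf-singletons, take $F_1$ to be subtrees spanned by prescribed leaf-sets, and choose those leaf-sets at random so that the bipartite intersection pattern is a uniformly random $0/1$ matrix with no large monochromatic rectangle --- but your execution differs from the paper's in two respects worth recording.

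First, you work in an arbitrary $k$-leaf tree $T$ by anchoring each $A_S$ at a fixed non-leaf vertex $c$ and invoking the elementary fact that a leaf lies in $\tree_T(U)$ only if it lies in $U$. The paper's Lemma~\ref{l:BipartiteAsSubtree} carries out the same encoding only in the star $K_{1,k}$; your observation is precisely what upgrades this to the theorem as stated (``let $T$ be a tree with $k$ leaves'').

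Second, your first-moment computation is more direct: you take $|F_1|=|F_2|=k$, set $s=\lfloor 2\log_2 k\rfloor+1$, and bound $2\binom{k}{s}^{2}2^{-s^2}\le 2(s!)^{-2}2^{-s(s-2\log_2 k)}<1$ for $k\ge 17$. The paper instead takes a $k\times n$ random bipartite graph, lets $n\to\infty$, and invokes Stirling's formula through an auxiliary lemma; this is why it needs $c(k)<\tfrac12$ and hence $k\ge 17$, and why the resulting $F_1,F_2$ have unequal sizes (patched afterwards by duplication). Your square-matrix bound avoids all of this and gives $|F_1|=|F_2|$ immediately. One small remark: your sentence ``this is the only place the hypothesis $k\ge 17$ enters'' overstates things --- your inequality $2(s!)^{-2}<1$ already holds for much smaller $k$ --- but of course the argument is valid for $k\ge 17$, which is all that is claimed.
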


\begin{remark*}
To prove Theorem~\ref{t:HomogeneousTreeLeaf}, we give a construction, where $F_1$ and $F_2$ have different sizes. By duplicating vertices, we can construct $F_1'$ and $F'_2$ with equal size so that they satisfy the statement of Theorem~\ref{t:HomogeneousTreeLeaf}.
Let $F_1$ and $F_2$ be any families that satisfy Theorem~\ref{t:HomogeneousTreeLeaf}, say $|F_1|=n$ and $|F_2|=m$ for some positive integers $n$ and $m$. For any integer $t \ge 1$, we can take $F_1'$ as the multiset having $mt$ copies of each element of $F_1$ and $F_2'$ as be the multiset having $nt$ copies of each element of $F_2$.
Then $|F_1'|=|F_2'|$ and definitely $F_1'$ and $F_2'$ also satisfy Theorem~\ref{t:HomogeneousTreeLeaf}.
\end{remark*}

We split the proof of Theorem \ref{t:HomogeneousTreeLeaf} into two steps. First, we show that every bipartite graph can be ``realized" as an intersection graph between two subtree families of some tree. Then we complete the proof by showing the existence of a bipartite graph $G$ without a large bi-clique in $G$ or in $\overline{G}$, which is realized as subtree families of a tree with at most $k$ leaves.

\begin{lemma}\label{l:BipartiteAsSubtree}
Let $G \subseteq K_{m, n}$ be a bipartite graph with $2 \leq m \leq n$. There exists a tree $T$ with $m$ leaves and two subtree families $F_1$ and $F_2$ of $T$ such that the intersection graph between $F_1$ and $F_2$ is isomorphic to $G$.
\end{lemma}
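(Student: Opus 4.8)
The plan is to build the tree $T$ by placing one leaf-path for each vertex of the smaller side of $G$ and routing the subtrees representing the larger side through the appropriate collection of these paths, using a central ``hub'' subtree to guarantee connectivity. Concretely, let $G\subseteq K_{m,n}$ have bipartition classes $A=\{a_1,\dots,a_m\}$ and $B=\{b_1,\dots,b_n\}$. I would take $T$ to be a spider (subdivided star $K_{1,m}$) with center $c$ and $m$ legs $P^{(1)},\dots,P^{(m)}$, where leg $P^{(i)}$ is a path $c=p^{(i)}_0,p^{(i)}_1,\dots,p^{(i)}_{\ell}$ of common length $\ell$; this tree has exactly $m$ leaves (since $m\ge 2$). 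The length $\ell$ should be taken large enough — say $\ell=n$ — so that along each leg there is enough ``room'' to give the $n$ subtrees of $F_2$ distinct reach.

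For the family $F_1$ (representing $A$): I want subtree $S_i\in F_1$ to ``own'' leg $i$, so set $S_i$ to be the subpath of leg $P^{(i)}$ consisting of everything \emph{except} the center, e.g.\ $S_i=\{p^{(i)}_1,\dots,p^{(i)}_{\ell}\}$ (or a suitable sub-path of it disjoint from $c$). Thus the $S_i$ are pairwise disjoint. For the family $F_2$ (representing $B$): for each $b_j$ let $N_j=\{\,i : a_ib_j\in E(G)\,\}\subseteq[m]$ be its neighborhood. Define the subtree $R_j\in F_2$ to be the minimal subtree of $T$ containing the center $c$ together with, for each $i\in N_j$, a short initial segment of leg $i$ that reaches far enough to meet $S_i$ but, for $i\notin N_j$, nothing of leg $i$ beyond $c$. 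Using the spider structure, $R_j=\tree_T(\{c\}\cup\{p^{(i)}_1:i\in N_j\})$ is indeed a connected subtree (all the chosen vertices hang off $c$), and $R_j\cap S_i\neq\emptyset$ iff $p^{(i)}_1\in R_j$ iff $i\in N_j$. One subtlety: if some $N_j=\emptyset$, the subtree $R_j=\{c\}$ still works since then $R_j$ misses every $S_i$; and if $N_j=\emptyset$ for several $j$ we may need $R_j$ to be nonempty, which $\{c\}$ handles. This makes the bipartite intersection graph between $F_1$ and $F_2$ exactly $G$: $S_i\sim R_j \iff i\in N_j \iff a_ib_j\in E(G)$.

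The routine verification then has three parts: (a) each $R_j$ is a genuine subtree of $T$ (connectedness — immediate from the spider picture, since $c$ lies on every leg); (b) each $S_i$ is a subtree (it is a subpath) and the $S_i$ are pairwise disjoint, which is automatic as distinct legs meet only at $c\notin S_i$; (c) the displayed equivalence $S_i\cap R_j\neq\emptyset\iff a_ib_j\in E(G)$. I do not anticipate a real obstacle here — the only thing to be careful about is making the definitions so that legs are ``long enough'' and the $S_i$ are kept strictly away from $c$, so that membership of $R_j$ in leg $i$ is controlled solely by whether $i\in N_j$; and noting that $T$ has exactly $m$ leaves (not fewer), which uses $m\ge 2$. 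Once Lemma~\ref{l:BipartiteAsSubtree} is in place, Theorem~\ref{t:HomogeneousTreeLeaf} follows by plugging in a bipartite graph $G$ with no large balanced bi-clique in $G$ or $\overline G$ (a counting/probabilistic existence argument, the second step mentioned in the outline) and applying the lemma with a suitable choice of $m\le k$.
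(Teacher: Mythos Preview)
Your proposal is correct and follows essentially the same approach as the paper: build a star-shaped tree with $m$ leaves, represent each vertex of the small side by a piece of one leg, and represent each vertex of the large side by the minimal subtree through the center reaching into exactly the legs corresponding to its neighborhood. The paper in fact does this with the unsubdivided star $K_{1,m}$, taking the small-side subtrees to be the singleton leaves and the large-side subtrees to be $\tree_T(\{v_i : w_i x\in E(G)\})$; your subdivision and the extra length $\ell=n$ are harmless but unnecessary.
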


\begin{proof}
Let $V = V_1 \cupdot V_2$ be the vertex partition of $K_{m, n}$ and say $V_1 = \{w_1, w_2, \ldots, w_m\}$. Consider a star $T = K_{1, m}$ on $\{u, v_1, v_2, \ldots, v_m\}$ where $u$ is the vertex of degree $m$ in $T$. For each vertex $x \in V_2$, let $G_x$ be the subtree of $T$ with edge set $E(G_x) = \{ v_i u : w_i x \in G \}$. i.e. $G_x = \tree_T(\{v_i : w_i x \in G\})$. Note that $G_x$ is isomorphic to the star of $x$ in $G$. Define the first subtree family $F_1$ as
$$
F_1 = \left\{ G_x : x \in V_2 \right\}.
$$
For each $i \in [m]$, let $H_i$ be the tree on $\{v_i\}$ with no edges. The second family consists of all such ``singletons" $H_i$:
$$
F_2 = \left\{ H_i : i \in [m] \right\}.
$$
Two trees $G_x$ and $H_i$ intersect if and only if $w_i x \in G$, showing that the intersection graph between $F_1$ and $F_2$ is isomorphic to $G$.
\end{proof}

From now on, let $k \geq 17$ be a fixed integer. For simplicity let $c = c(k) = \frac{2}{\ln 2} \frac{\ln k}{k}$. Note that $c(k) < \frac{1}{2}$ for every $k \geq 17$.

Let $n \geq k$ be an integer and consider a random graph $G \subseteq K_{k, n}$ formed by independently choosing each edge of $K_{k, n}$ with probability $\frac{1}{2}$. Let $a = \ceil{ck}$ and $b = \ceil{cn}$ be integers.
Let $X$ be the total number of copies of $K_{a,b}$ in $G$. We show $2E[X] < 1$ 
for sufficiently large $n$ so that there is some $G$ that $K_{a, b}$ is contained in neither of $G$ nor $\overline{G}$ as a subgraph. Then the subtree representation of $G$ by the subtree families $F_1$ and $F_2$ of $T = K_{1, k}$ provided by Lemma~\ref{l:BipartiteAsSubtree} satisfies Theorem~\ref{t:HomogeneousTreeLeaf}.

By linearity of expectation, we have
$$
E[X] = \binom{k}{a} \; \binom{n}{b} \; 2^{1 - ab}.
$$
In order to estimate $E[X]$, we need the following lemma for binomial coefficients.

\begin{lemma}
Let $r \in (0, 1)$ be a rational number. Let $d$ be a real number such that $\frac{1}{r^r (1-r)^{1-r}} < d$. For every sufficiently large $n$ such that $rn$ is an integer, it holds that $\binom{n}{rn} < d^n$.
\end{lemma}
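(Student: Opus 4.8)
The statement to prove is the asymptotic bound on a central binomial-type coefficient: for rational $r \in (0,1)$ and real $d > \frac{1}{r^r(1-r)^{1-r}}$, we have $\binom{n}{rn} < d^n$ for all sufficiently large $n$ with $rn \in \mathbb{Z}$.

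\textbf{Plan.} The natural approach is to estimate $\binom{n}{rn}$ via Stirling's formula. First I would recall Stirling in the form $n! = \sqrt{2\pi n}\,(n/e)^n e^{\lambda_n}$ with $\lambda_n \to 0$ (or more crudely $n! = \Theta(\sqrt{n})(n/e)^n$), and apply it to each of the three factorials in $\binom{n}{rn} = \frac{n!}{(rn)!\,((1-r)n)!}$. Writing $s = 1-r$, the exponential part becomes
\[
\frac{(n/e)^n}{(rn/e)^{rn}\,(sn/e)^{sn}} = \frac{n^n}{(rn)^{rn}(sn)^{sn}} = \frac{1}{r^{rn} s^{sn}} = \left(\frac{1}{r^r s^s}\right)^n,
\]
since the powers of $e$ cancel ($n = rn + sn$) and $n^n = n^{rn} n^{sn}$ cancels against the $n^{rn}, n^{sn}$ in the denominator. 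The polynomial correction factors contribute $\frac{\sqrt{2\pi n}}{\sqrt{2\pi r n}\sqrt{2\pi s n}} = \frac{1}{\sqrt{2\pi r s n}}$, and the $e^{\lambda}$ terms are bounded. Hence there is a constant $C = C(r)$ (in fact one can take $C$ slightly larger than $1/\sqrt{2\pi r s \cdot 1}$ uniformly) such that
\[
\binom{n}{rn} \le C \left(\frac{1}{r^r s^s}\right)^n
\]
for all valid $n$; in fact one can even drop the $\sqrt{n}$ to get $\binom{n}{rn} \le \left(\frac{1}{r^r s^s}\right)^n$ up to a bounded multiplicative constant, and more simply $\binom{n}{rn} \le 2^{n H(r)}$ where $H$ is binary entropy, but the Stirling form is what matches the stated constant exactly.

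\textbf{Finishing.} Set $\beta = \frac{1}{r^r(1-r)^{1-r}}$, so by hypothesis $d > \beta$, i.e. $d/\beta > 1$. From the bound above, $\binom{n}{rn} \le C\,\beta^n$, hence $\binom{n}{rn} / d^n \le C\,(\beta/d)^n \to 0$ as $n \to \infty$ since $\beta/d < 1$. Therefore for all sufficiently large $n$ (with $rn$ an integer) we have $C(\beta/d)^n < 1$, i.e. $\binom{n}{rn} < d^n$, as desired. Rationality of $r$ is used only to guarantee that there are infinitely many $n$ with $rn \in \mathbb{Z}$ (namely multiples of the denominator of $r$), so the statement ``for every sufficiently large $n$ such that $rn$ is an integer'' is non-vacuous; the estimate itself holds for all real $r \in (0,1)$.

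\textbf{Main obstacle.} There is no real obstacle here; the only care needed is to make the constant in the Stirling estimate genuinely uniform in $n$ (not just asymptotic), which is handled by using an explicit two-sided Stirling bound such as $\sqrt{2\pi n}(n/e)^n \le n! \le e\sqrt{n}(n/e)^n$, valid for all $n \ge 1$. With such explicit bounds the inequality $\binom{n}{rn} \le C(r)\,\beta^n$ holds for every $n$, and then the exponential decay of $(\beta/d)^n$ does the rest. One could alternatively phrase the whole argument through $\frac{1}{n}\ln\binom{n}{rn} \to -r\ln r - (1-r)\ln(1-r) = \ln\beta$, which follows from Stirling, and then note $\ln\beta < \ln d$; this avoids tracking constants altogether but is essentially the same computation.
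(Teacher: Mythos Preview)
Your proof is correct and follows essentially the same route as the paper: the paper simply invokes Stirling's approximation to conclude $\lim_{n\to\infty}\binom{n}{rn}^{1/n}=\frac{1}{r^r(1-r)^{1-r}}$, which is exactly the limit formulation you mention in your final paragraph, and the conclusion is immediate from $d>\beta$. Your version is more detailed in tracking the polynomial correction and uniform constants, but the argument is the same.
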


\begin{proof}
By Stirling's approximation, we have $\displaystyle{\lim_{n \to \infty} \sqrt[n]{n!} = \frac{n}{e}}$. Using this formula one can easily show that $\displaystyle{\lim_{n \to \infty} \binom{n}{rn}^{1/n} = \frac{1}{r^r (1-r)^{1-r}}}$.
\end{proof}

Let $r \in (c, \frac{1}{2})$ be a rational number slightly larger than $c(k)$. For sufficiently large $n$ such that $rn$ is an integer and $rn \geq b$, we bound $E[X]$ from above as:
$$
E[X] = \binom{k}{a} \binom{n}{b} 2^{1 - ab} \leq 2^k \left( \frac{1}{r^r (1-r)^{1-r}} \right)^n 2^{1 - c^2 nk} = 2^{k+1} \left( \frac{1}{2^{c^2 k} r^r (1-r)^{1-r}} \right)^n
$$
Now our goal is to show $2^{c^2 k} r^r (1-r)^{1-r} > 1$ so that $E[X] < 1$ for sufficiently large $n$. Taking logarithm, the inequality is equivalent with:
$$
0 < c^2 k \ln 2 + r \ln r + (1-r) \ln (1-r).
$$
Putting $ck = \frac{2 \ln k}{\ln 2}$ yields
\begin{align*}
0 & < 2c \ln k + r \ln r + (1-r) \ln (1-r) = r \left(\frac{2c}{r} \ln k + \ln r \right) + (1-r) \ln (1-r)\\
& = r \ln k^{2c/r} r + (1-r) \ln (1-r).
\end{align*}
One can easily check that $r \ln \frac{1}{r} + (1-r) \ln(1-r) > 0$ for every $r \in (0, \frac{1}{2})$. Thus it is enough to show that $k^{2c/r} r \geq \frac{1}{r}$ or equivalently $k^{2c/r} r^2 \geq 1$ for our choices of $k, c$ and $r$. However, this easily follows from the continuity of an auxiliary function $f(x) = k^{2c/x} x^2$ at $x = c$ and the fact that $f(c) = k^2 c^2 = \left( \frac{2 \ln k}{\ln 2} \right)^2 > 1$.


\section{Concluding remarks} \label{sec:concl}

Recall that intersection graphs of planar convex sets have the SEH-property. On the other hand, they do not enjoy the CEH-property.
This can be easily seen by Theorem~\ref{t:HomogeneousTreeLeaf} and the following Lemma~\cite{Kim2014}.

\begin{lemma} \label{l:SubtreeNerveAsConvex}
Let $T$ be a tree and $F$ be a family of subtrees of $T$. There is a family $C$ of convex sets in $\R^2$ such that $C$ and $F$ have isomorphic nerve complexes.
\end{lemma}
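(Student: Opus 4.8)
The plan is to build the convex sets directly from a carefully chosen planar drawing of $T$, exploiting the classical Helly property of subtrees of a tree: any finite family of subtrees of $T$ has nonempty intersection if and only if its members pairwise intersect, and in that case they share a vertex of $T$. Thus the nerve of $F=\{T_1,\dots,T_n\}$ is a flag complex, and a subset $\sigma\subseteq[n]$ is a face precisely when $\bigcap_{i\in\sigma}T_i$ contains a vertex. I will produce convex sets $C_1,\dots,C_n\subseteq\R^2$ such that $C_i\cap C_j\neq\emptyset$ exactly when $T_i\cap T_j\neq\emptyset$, with the extra feature that whenever $T_i\cap T_j=\emptyset$ the sets $C_i$ and $C_j$ are separated by a line arising from an edge of $T$; by the Helly property this forces the \emph{whole} nerve of $\{C_i\}$ to agree with that of $F$, not merely the $1$-skeleton.

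The core of the argument is the drawing. Assuming $T$ has at least one edge (otherwise the lemma is trivial), root $T$ at an arbitrary vertex, run a depth-first search, and place the vertices of $T$ on a circle in the cyclic order of their first-visit times; call this map $\phi$, and note the points are in strictly convex position. The key property of a DFS order on a tree is that, for every vertex $v$, the set of descendants of $v$ occupies a consecutive block in the order. Hence for every edge $e=uv$ of $T$ (say $v$ a child of $u$), the vertex set of the component of $T-e$ containing $v$ forms a circular arc $A$, and the other component forms the complementary arc $B$. Two complementary arcs of points in strictly convex position have disjoint convex hulls: pick a point of the circle strictly inside each of the two gaps separating $A$ from $B$; the chord through these two points has all of $\phi(A)$ strictly on one side and all of $\phi(B)$ strictly on the other, so $\conv(\phi(A))\cap\conv(\phi(B))=\emptyset$.

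Now for each $i$ let $C_i:=\conv\big(\phi(V(T_i))\big)$, a convex polygon (a single point if $T_i$ is a one-vertex subtree), and consider the labeling $C_i\leftrightarrow T_i$. If $\sigma\subseteq[n]$ is a face of the nerve of $F$, then by Helly the subtrees $\{T_i\}_{i\in\sigma}$ share a vertex $w$, so $\phi(w)\in\bigcap_{i\in\sigma}C_i$ and $\sigma$ is a face of the nerve of $\{C_i\}$. Conversely, if $\sigma$ is not a face of the nerve of $F$, then by Helly there are $i,j\in\sigma$ with $T_i\cap T_j=\emptyset$; since $T$ is a tree some edge $e$ has $T_i$ and $T_j$ in different components $T',T''$ of $T-e$, whence $C_i\subseteq\conv(\phi(V(T')))$ and $C_j\subseteq\conv(\phi(V(T'')))$, two hulls that are disjoint by the previous paragraph. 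So $C_i\cap C_j=\emptyset$ and $\sigma$ is not a face of the nerve of $\{C_i\}$. Hence the two nerve complexes coincide under this labeling, in particular they are isomorphic. If one wants full-dimensional convex bodies, replace each $C_i$ by a sufficiently small closed $\delta$-neighborhood: a common vertex still lies in all the enlarged sets, while the disjoint hulls produced above are at positive distance, so for $\delta$ small enough the nerve is unchanged.

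The only delicate point is the choice of drawing. An arbitrary straight-line planar embedding of $T$ does not suffice, and neither does simply taking convex hulls of the subtrees in such an embedding: two vertex-disjoint subtrees can have interleaving vertex sets and hence crossing convex hulls, which would create spurious faces. The DFS-on-a-circle drawing is exactly what forces the two sides of every edge to be complementary arcs, which is the combinatorial condition that yields a separating line for each edge-cut; once this is in place, the remainder is a direct application of the Helly property of subtrees of a tree.
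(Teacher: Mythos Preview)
The paper does not prove this lemma; it is quoted from the external reference \cite{Kim2014}, so there is no in-paper argument to compare against.

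Your proof is correct and self-contained. The two ingredients mesh exactly as you describe: the Helly property of subtrees reduces the nerve comparison to pairwise intersections, and the DFS placement of $V(T)$ on a circle guarantees that for every edge $e$ the two components of $T-e$ occupy complementary arcs, hence have convex hulls separated by a chord. Consequently $C_i\cap C_j=\emptyset$ whenever $T_i\cap T_j=\emptyset$, and a shared vertex witnesses the forward inclusion at every level, so the nerves coincide. One minor comment: your observation that an arbitrary planar straight-line embedding of $T$ would \emph{not} work is well taken and worth keeping, since it explains why the DFS-on-a-circle idea is the actual content of the construction rather than a cosmetic choice.
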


It is natural to ask whether intersection graphs of convex sets in higher dimensions satisfy SEH- or CEH-properties. However, it is already pointless to consider those properties in dimension three since every graph can be realized as the intersection graph of some convex sets in $\R^3$ \cite{Tietze3d}. Another direction is to consider Erd\H{o}s-Hajnal type properties in the class of \emph{intersection hypergraphs}. We conjecture that intersection 3-uniform hypergraphs of planar convex sets satisfy the following generalization of SEH-property.

\begin{conj}\label{c:SEHof3unif}
There exists a constant $c > 0$ that makes the following hold. For every finite family $F$ of convex sets in $\R^2$ (or in $\mathbb{R}^3$), we can find pairwise disjoint subfamilies $F_1$, $F_2$, $F_3$ $\subseteq F$ of size $\size{F_i} \geq c \size{F}$ for every $i = 1, 2, 3$ such that either every rainbow triple of $F_1$, $F_2$, $F_3$ intersect or every such rainbow triple do not intersect.
\end{conj}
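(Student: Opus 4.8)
\medskip

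We conclude with a sketch of a possible route to Conjecture~\ref{c:SEHof3unif}, at least in the plane; the spatial case should follow the same scheme once Helly's theorem is used in dimension three. The plan is to treat the conjecture as a $3$-uniform version of the strong Erd\H{o}s--Hajnal property and to follow the semialgebraic blueprint of Alon et al.~\cite{alon-semi2005}, which already yields the conclusion for $3$-uniform hypergraphs defined by bounded-complexity semialgebraic relations. The first step is to reduce to a situation in which the \emph{pairwise} intersection pattern is already homogeneous: given a finite family $F$ of planar convex sets, apply the SEH property for planar convex sets~\cite{FoxPachToth2010} to $F$ to obtain disjoint homogeneous subfamilies $A$ and $B$, then apply it again inside $A$ to split $A$ into disjoint homogeneous subfamilies $A'$ and $A''$. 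The three families $A', A'', B$ are then pairwise homogeneous (complete or empty in the pairwise intersection graph between any two of them) and each has size $\Omega(\size{F})$.

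If some pair among $A', A'', B$ falls in the ``empty'' case, then no rainbow triple has a common point and we are done. Thus the crux is the case where $A', A'', B$ pairwise \emph{cross} completely, and the remaining task is to pass to sub-subfamilies whose \emph{triple} intersection relation is homogeneous. Here I would attach to each convex set a constant-size bundle of ``feature points'' (for instance its extreme points in a fixed finite set of directions, together with a point witnessing each required pairwise crossing), apply the same-type lemma of B\'ar\'any and Valtr to these point bundles to pass to a further constant-fraction subfamily on which all rainbow transversals of feature points share one order type, and finally run the semialgebraic Ramsey bootstrap of~\cite{alon-semi2005} on the resulting transversal-homogeneous families. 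The point one hopes to exploit is that, \emph{once all pairwise crossings are known}, whether $A\cap B\cap C$ is nonempty is governed by a bounded-complexity semialgebraic predicate in the feature points, so order-type homogeneity forces triple-intersection homogeneity. An alternative to the same-type route would be to use the description of planar convex-set crossing via boundedly many partial orders (as in~\cite{FoxPachToth2010}) and to seek a $3$-uniform analogue of the bipartite Dilworth-type theorem proved there.

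The main obstacle is precisely this last claim: a general convex set has an arbitrary boundary and is not semialgebraic of bounded complexity, so it is not a priori clear that ``$A\cap B\cap C\neq\emptyset$'' is controlled by a constant amount of combinatorial data even after fixing all pairwise crossings; making this rigorous is where the real work lies. One natural sub-plan is to first find a large subfamily admitting a common line transversal and thereby reduce the triple-intersection structure to the interval setting already handled in the proof of Theorem~\ref{thm:ceh}; another is a direct cutting/grid argument on the convex regions $\{A\cap B\}$. Two further difficulties: in $\R^3$ the Helly number is four, so a completely pairwise-crossing rainbow triple may still fail to have a common point, and one must additionally keep track of which pairs inside a rainbow triple meet; and the constants must be chased carefully, since iterated SEH, the same-type lemma, and the semialgebraic bootstrap each preserve only a fixed fraction, and these have to compose into a single $c>0$ with all three of $F_1, F_2, F_3$ simultaneously of size at least $c\size{F}$.
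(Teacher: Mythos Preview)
The statement you are addressing is a \emph{conjecture} in the paper, not a theorem; the paper offers no proof of it. Consequently there is nothing to compare your proposal against, and any ``proof'' here would in fact resolve an open problem posed by the authors.

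Your write-up is not a proof and does not claim to be one: you explicitly label it a ``sketch of a possible route'' and identify the main obstacle yourself, namely that general planar convex sets are not semialgebraic of bounded description complexity, so the triple-intersection predicate $A\cap B\cap C\neq\emptyset$ is not obviously captured by a bounded amount of combinatorial data even after all pairwise crossings are fixed. That gap is genuine and is precisely why the statement remains a conjecture; neither the same-type lemma on finite ``feature point'' bundles nor the partial-order description of crossing from~\cite{FoxPachToth2010} is known to control triple intersections, and the semialgebraic machinery of~\cite{alon-semi2005} does not apply directly to arbitrary convex sets. Your reduction to the pairwise-homogeneous case via two applications of SEH is sound and is a natural first step, but everything after that is speculative. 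You also correctly flag the additional difficulty in $\mathbb{R}^3$ coming from the Helly number being four.

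In short: the paper has no proof for you to match, and your proposal is an outline of an approach with an honestly acknowledged, currently unbridged gap, not a proof.
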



\bibliographystyle{plain} 

\end{document}